\newtheorem{thm}{Theorem}[section]
\newtheorem*{thm*}{Theorem}
\newtheorem{prop}[thm]{Proposition}
\newtheorem{cor}[thm]{Corollary}
\newtheorem{lem}[thm]{Lemma}
\newtheorem{defn}[thm]{Definition}
\newcommand{\N}{\mathbb{N}}
\newcommand{\Z}{\mathbb{Z}}
\newcommand{\R}{\mathbb{R}}
\newcommand{\C}{\mathbb{C}}
\renewcommand{\H}{\mathcal{H}}
\let\c\overline
\newcommand{\diff}[1]{\frac{\partial}{\partial #1}}
\newcommand{\diffsq}[1]{\frac{\partial^2}{\partial #1^2}}
\newcommand{\delbar}{{\overline{\partial}}}
\newcommand{\del}{{{\partial}}}
\newcommand{\Vbar}{\overline{V}}
\newcommand{\phibar}{\overline{\phi}}
\newcommand{\mubar}{{\overline{\mu}}}
\newcommand{\im}{{\text{im}}}
\keywords{Kodaira dimension, Hodge numbers, harmonic forms, almost complex manifold, Kodaira-Thurston manifold}
\thanks{\newline 
The second author is partially supported by GNSAGA of INdAM}
\subjclass[2020]{32Q60; 53C15; 58A14}
\title[Almost complex structures on higher dimensional Kodaira-Thurston]{Left-invariant almost complex structures on the higher dimensional Kodaira-Thurston manifolds}
\author{Tom Holt and Riccardo Piovani}
\begin{document}
\begin{abstract}
We develop computational techniques which allow us to calculate the Kodaira dimension as well as the dimension of spaces of Dolbeault harmonic forms for left-invariant almost complex structures on the generalised Kodaira-Thurston manifolds.
\end{abstract}
\maketitle

\section{Introduction}
Let $(M,J)$ be an almost complex manifold  of real dimension $2n$; then the exterior derivative on $(p,q)$-forms decomposes as
\[
d=\mu+\del+\delbar+\mubar:A^{p,q}\to A^{p+2,q-1}\oplus A^{p+1,q}\oplus A^{p,q+1}\oplus A^{p-1,q+2}.
\]
According to the celebrated Theorem of Newlander and Nirenberg, $J$ is integrable, \textit{i.e.}, it is induced by the structure of a complex manifold, if and only if $\mu=0$, or equivalently $\delbar^2=0$.

Fixing an almost Hermitian metric $g$ and denoting by $*:A^{p,q}\to A^{n-q,n-p}$ the $\C$-linear Hodge star operator, the $L^2$-formal adjoint of $\delbar$ is given by $\delbar^*:=-*\del*$. Therefore one can define the {\em Dolbeault Laplacian}, as in the integrable case,  as the second order differential operator acting on the space $A^{p,q}$ of $(p,q)$-forms on $(M,J)$ as 
\[
\Delta_\delbar:=\delbar\delbar^*+\delbar^*\delbar:A^{p,q}\to A^{p,q}.
\]
It turns out that  $\Delta_\delbar$ is elliptic and formally self adjoint. By elliptic theory, if $M$ is compact, then the space of Dolbeault harmonic $(p,q)$-forms $\H^{p,q}_\delbar:=\ker\Delta_\delbar\cap A^{p,q}$ has finite dimension $h^{p,q}_\delbar$, and if moreover $J$ is integrable, then $\H^{p,q}_\delbar$ is isomorphic to the Dolbeault cohomology group $H^{p,q}_\delbar:=\frac{\ker\delbar}{\im\delbar}$, which is an invariant of the complex structure.

In Problem 20 of Hirzebruch's 1954 Problem List \cite{Hi}, Kodaira and Spencer asked if the number $h^{p,q}_\delbar$ is independent on the choice of the almost Hermitian metric $g$. The first author and Zhang just recently gave a negative answer to this problem \cite{HZ,HZ2}, building a family of almost complex structures on the 4-dimensional Kodaira-Thurston manifold, namely the product between the circle $S^1$ and the compact quotient  $H_{3}(\Z)\backslash H_{3}(\R)$ of the Heisenberg group,  showing that $h^{0,1}_\delbar$ varies with different choices of the almost Hermitian metric. We refer to  \cite{Ho,HP,P,PT5,TT} for further studies of Dolbeault harmonic forms on almost Hermitian manifolds.
We remark that, at the current state of the art, the only known example of a non integrable almost Hermitian structure where it is possible to compute $h^{p,q}_\delbar$ completely for all $0\le p,q\le n$ is just the 4-dimensional Kodaira-Thurston manifold. Furthermore, to our knowledge there are no known higher dimensional examples where one can actually compute $h^{p,q}_\delbar$ except for the $q=0$ case, where the computation does not involve the metric \cite{TT6}.

Recently in \cite{CZ,CZ2} Chen and Zhang defined the {\em Kodaira dimension} for almost complex manifolds, extending the well known notion for complex manifolds. We point out that all the known examples of compact solvmanifolds endowed with a left-invariant almost complex structure have Kodaira dimension 0 or $-\infty$ (see \cite{CZ,CNT,CNT2, CNT3}).

The aim of this paper is to develop computational techniques to calculate the Kodaira dimension $\kappa_J$ and the numbers $h^{p,q}_\delbar$ on the {\em generalised Kodaira-Thurston manifolds}
$$
KT^{2n+2}=H_{2n+1}(\Z)\backslash H_{2n+1}(\R)\times S^1,
$$
where $H_{2n+1}(\R)$ denotes the generalised Heisenberg group (see, \textit{e.g.}, \cite[p.24]{BT}), endowed with natural almost complex and almost Hermitian structures. Note that for $n=1$, $KT^4$ is the $4$-dimensional Kodaira-Thurston manifold.

In Section 4 we prove the following (see Theorem \ref{kodaira-dimension-KT})
\begin{thm*}
 On the $(2n+2)$-dimensional Kodaira-Thurston manifold $KT^{2n+2}$ endowed with a left-invariant almost complex structure $J$, there are only two possible values for the Kodaira dimension: $0$ or $-\infty$.
\end{thm*}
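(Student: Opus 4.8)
The plan is to reduce the computation of the plurigenera $P_m=\dim_\C H^0(KT^{2n+2},K_J^{\otimes m})$ to an explicit system of first order linear PDEs on the nilmanifold, and to show that for every $m$ the solution space has dimension at most one. Boundedness of $P_m$ then forces $\kappa_J\in\{0,-\infty\}$, since any strictly positive Kodaira dimension would require $P_m$ to grow polynomially in $m$.

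First I would fix the standard left-invariant coframe $e^1,\dots,e^{2n+2}$ with $de^i=0$ for $i\ne 2n+1$ and $de^{2n+1}=-\sum_{i=1}^n e^{2i-1}\wedge e^{2i}$, and choose a left-invariant $(1,0)$-coframe $\phi^1,\dots,\phi^{n+1}$ for $J$. The canonical bundle $K_J=\Lambda^{n+1,0}$ is then globally trivialised by the invariant section $\sigma=\phi^1\wedge\cdots\wedge\phi^{n+1}$, so every pluricanonical section may be written uniquely as $s=f\,\sigma^{\otimes m}$ with $f\in C^\infty(KT^{2n+2},\C)$. Computing $\delbar\sigma$ amounts to extracting the $(n+1,1)$-part of $d\sigma$, which only sees the $(1,1)$-components of the $d\phi^k$; since the sole non-closed generator contributes $de^{2n+1}=-\sum e^{2i-1}\wedge e^{2i}$, one gets $\delbar\sigma=\theta\wedge\sigma$ for an explicit left-invariant form $\theta\in A^{0,1}$ whose constant coefficients are the relevant traces of the structure constants of $J$. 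By the Leibniz rule the induced operator on $K_J^{\otimes m}$ gives $\delbar s=(\delbar f+m f\,\theta)\otimes\sigma^{\otimes m}$, so $s$ is holomorphic exactly when
\[
\delbar f = -m\,f\,\theta,\qquad\text{i.e.}\qquad \bar L_j f=-m\,\theta_j\,f\quad(1\le j\le n+1),
\]
where $\bar L_j$ is the anti-holomorphic frame dual to $\phi^j$ and $\theta=\sum_j\theta_j\,\overline{\phi^j}$.

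The core step is to bound the dimension of the solution space of this system. If $f_1$ and $f_2$ are two solutions then $f_1\,\delbar f_2-f_2\,\delbar f_1=0$, so $\delbar(f_1/f_2)=0$ wherever $f_2\ne 0$; hence it suffices to show that a nonzero solution is nowhere vanishing and that a globally defined $\delbar$-closed function on $KT^{2n+2}$ is constant. To make both statements effective I would pass to the covering group and expand $f$ in Fourier modes along the closed directions (the base torus and the $S^1$ factor) while decomposing the Heisenberg directions according to the central character, as in the Stone--von Neumann picture. The equations $\bar L_jf=-m\theta_jf$ are compatible with this decomposition: in each sector of nontrivial central character the system is overdetermined and admits only $f\equiv 0$, whereas in the trivial-central-character sector the fields $\bar L_j$ act with constant coefficients and the system reduces to an affine condition selecting at most one Fourier exponential. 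Either way $\dim\{f:\delbar f=-mf\theta\}\le 1$, and the case $m=0$ shows that $\delbar$-closed functions are constant.

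Therefore $P_m\le 1$ for all $m\ge 1$, so the plurigenera are bounded and $\kappa_J$ cannot be positive; depending on whether the integrality condition selecting the exponential is met for some $m\ge 1$, one obtains $P_m=1$ infinitely often (whence $\kappa_J=0$) or $P_m=0$ for all $m\ge1$ (whence $\kappa_J=-\infty$). I expect the main obstacle to be the representation-theoretic bookkeeping in the nontrivial central characters, together with controlling the computation uniformly over the full parameter space of left-invariant $J$; in particular, ruling out solutions in the twisted sectors — essentially an over-determinacy statement for the $\bar L_j$ along the fibres of the nilmanifold — is the delicate point, as is checking that the ratio argument is not obstructed by zeros of $f_2$.
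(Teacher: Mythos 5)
Your strategy is the same as the paper's: trivialise $K_J$ by the left-invariant section $\sigma$, reduce $N$-canonical sections $f\,\sigma^{\otimes N}$ to the left-invariant system $\overline{V}_j f + NC_j f = 0$, and decompose $L^2(KT^{2n+2})$ under the right regular representation into the Fourier sectors $\mathcal{S}_I$ (trivial central character) and the Weil--Brezin/Stone--von Neumann sectors $\mathcal{T}_J$ (central parameter $m\neq 0$), counting solutions sector by sector. The problem is that the step which actually carries the theorem --- that no twisted sector contains a solution --- is precisely the step you defer, and the justification you sketch for it (``the system is overdetermined'') is not a proof and is not the mechanism that works: an overdetermined system of this kind can perfectly well admit solutions (the trivial sector is equally overdetermined, yet contributes them), so nothing follows from counting equations. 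What the paper does instead uses a \emph{single} equation of the system after a normalisation: since $T^{1,0}$ has complex dimension $n+1$ and $\mathrm{span}_{\mathbb{C}}\{\nu_{n+1},\dots,\nu_{2n+2}\}$ has dimension $n+2$ inside the $(2n+2)$-dimensional complexified tangent space, the two intersect nontrivially, so one may choose $V_1$ with no $\partial/\partial x_j$ components. On $\mathcal{T}_J$ the operator $\overline{V}_1 + NC_1$ then acts through the Weil--Brezin transform as multiplication by an affine function $B(\mathbf{x})$ on the Schwartz function $\psi$; if $B$ is nonconstant it is nonzero on a dense set, forcing $\psi\equiv 0$ by continuity; if $B$ is constant, then $V_1\in\mathrm{span}\{\partial_z,\partial_t\}$ is central, so $[V_j,\overline{V}_1]=0$ forces $C_1=0$, and the linear independence of $V_1,\overline{V}_1$ turns $B=0$ into $m=l=0$, contradicting $m\neq 0$. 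Some argument of this kind is indispensable; without it your proof does not close.

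Two smaller points. First, the preliminary ratio argument ($f_1/f_2$ is $\delbar$-closed, so it suffices that a nonzero solution be nowhere vanishing) should be dropped: you give no reason why a solution of $\delbar f = -Nf\theta$ on a non-integrable almost complex manifold is zero-free --- there is no local theory of pseudoholomorphic sections available here, and even in the integrable case sections of line bundles generically vanish along divisors --- but it is also unnecessary. Second, the ``at most one Fourier exponential'' claim in the trivial sector needs its own (easy) justification: writing the condition on $e^{2\pi i(p\cdot x+q\cdot y+lt)}$ as a real linear system, the $(2n+2)\times(2n+1)$ coefficient matrix has maximal rank precisely because $V_1,\dots,V_{n+1},\overline{V}_1,\dots,\overline{V}_{n+1}$ are linearly independent; hence the real solution $(p,q,l)$, if it exists, is unique and scales linearly in $N$. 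This yields $P_N\leq 1$ for all $N\geq 1$ and the dichotomy exactly as you state it: a rational solution gives $P_N=1$ for infinitely many $N$, so $\kappa_J=0$; otherwise $P_N=0$ for all $N\geq 1$, so $\kappa_J=-\infty$.
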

We are able to compute the Kodaira dimension thanks to the characterisation of $L^2$ functions on $KT^{2n+2}$ via the regular representation of the Heisenberg group $H_{2n+1}(\R)$ (see \cite{Aus}).

In Sections 6 and 7, we calculate $h^{0,1}_\delbar$ for a family of left-invariant almost Hermitian structures on $KT^4$ and $KT^{6}$, respectively.  
To do so, we make use of a description of the eigenfunctions of the Hodge-de Rham Laplacian on $KT^{2n+2}$ using Hermite polynomials (see \cite{DS}). Note that this method has technical differences with the one used in \cite{HZ} on $KT^4$ to answer the Kodaira-Spencer question. 
In fact, in Section 6 our technique allows the same values of $h^{0,1}_{\delbar}$ to be computed on the same almost Hermitian structure on $KT^4$ in a much simpler way. In Section 7 we prove that the space $\H^{0,1}_\delbar$ on $KT^{6}$, endowed with a family of left invariant almost complex structures and any compatible left invariant almost Hermitian metric, consists of left-invariant forms.

We refer to \cite{MPR} for another study of the eigenfunctions of the Hodge-de Rham Laplacian on the generalised Heisenberg group, and to \cite{MPR2} for the issue of Hodge decomposition for the Hodge Laplacian on functions and on differential forms on the same manifold.

\medskip\medskip
\noindent{\em Acknowledgments.}
We would like to thank Adriano Tomassini for his suggestions which provided the idea for this paper and also for several fruitful discussions. Additionally, we thank Fulvio Ricci for providing some good references and helpful comments. 

\section{Representation Theory}

Let $X$ be a smooth homogeneous space, in other words, for some Lie group $G$, $X$ is given by the space of cosets of a closed Lie subgroup $\Gamma \subset G$
$$X = \Gamma \backslash G := \left\{\Gamma g \in G\, \middle|\, g \in G \right\}. $$
The \textit{(right) regular representation} of $G$ is defined on $L^2(X)$ with the right Haar measure by
\begin{align*}
    R(g):L^2 (G) &\rightarrow L^2 (G)\\
    f&\mapsto f \circ r_g
\end{align*}
where $r_g$ is the right multiplication function, \textit{i.e.}, $r_g: h \mapsto hg$ for all $g,h \in G$.
Note that $r_g$ is also well-defined as a map on $X$, sending left cosets to left cosets, 
$$r_g:\Gamma h \mapsto \Gamma hg.$$ 
We shall assume that the right Haar measure coincides with the left Haar measure, \textit{i.e.}, $G$ is unimodular. In this case the Haar measure descends to a measure on $X$ and thus the regular representation on $L^2(G)$ induces a unitary representation on $L^2(X)$
\begin{align*}
    R(g):L^2 (X) &\rightarrow L^2 (X)\\
    f&\mapsto f \circ r_g
\end{align*}
which we will also call the regular representation. Nilpotent groups provide an excellent family of groups which are all unimodular \cite[Corollary of Prop. 25]{Nac}. Indeed the Kodaira-Thurston manifolds we shall consider later are all given by the quotient of some nilpotent groups.

A decomposition of $L^2(X)$ into closed subspaces $\mathcal{S}_I$,
\begin{equation}\label{decomp}
    L^2(X) = \widehat{\bigoplus_{I\in \mathcal I}} \mathcal{S}_I,
\end{equation}
is called a decomposition of the regular representation if each of the spaces $\mathcal{S}_I$ is preserved by $R(g)$ for all $g \in G$. Here $\hat{\oplus}$ denotes the closure of the direct sum.

Let $\mathfrak{g}$ be the Lie algebra of $G$ with a basis given by the vectors $\nu_1, \dots, \nu_n \in \mathfrak g$. Extending $\nu_j$ left-invariantly to the whole of $G$, there is a well-defined pushforward to a vector field on $X$, which we also call $\nu_j$. We will call any vector field on $X$ defined in this way \textit{left-invariant}. Interpreting $\nu_j$ as a directional derivative on $X$, we say that a linear differential operator $P$ on $X$ is \textit{left-invariant} if it can be written in the form
$$
P = \sum_{\alpha \in \mathbb{N}^n,\,|\alpha|\le k } c_\alpha D^{\alpha} 
$$
where $\alpha= (\alpha_1, \dots, \alpha_n)$ is a multi-index, $|\alpha|=\alpha_1+\dots+\alpha_n$, $c_\alpha\in\C$ is a family of constants, and we define $D^{\alpha}$ to be the differential operator $\nu_1^{\alpha_1}\nu_{2}^{\alpha_2} \dots \nu_n^{\alpha_n}$.

Any decomposition of the regular representation \eqref{decomp} also gives us a  decomposition of all left-invariant differential operators. This should not be surprising as the directional derivative given by any $\nu \in \mathfrak g$ is also given by the differential of the regular representation
\begin{align*}
    dR(\nu): L^2(X) \cap C^{\infty}(X) &\rightarrow L^2(X) \cap C^{\infty}(X)\\
    f&\mapsto \nu f.
\end{align*}

For the convenience of the reader, we provide a simple proof of the relevant results below. For a more general discussion of the properties of the differential of a representation, we direct the reader to \cite{Bal}. 
\begin{thm}\label{S preserved}
    If $\mathcal{S}$ is a closed subspace of $L^2(X)$ preserved by the right regular representation then $\mathcal{S} \cap C^{\infty}$ is preserved by left-invariant vector fields.
    \begin{proof}
        An element of the Lie algebra $\nu \in \mathfrak g$ can be given as the tangent to some curve $\gamma(t)$ on $G$, passing through the identity at $t = 0$. The corresponding vector field on $X$ is then given at a general point $\Gamma g \in X$ as the tangent to the curve $\Gamma g \gamma(t)$. 

        Applying $\nu$ to a smooth function $f \in \mathcal{S} \cap C^{\infty}$ gives us the directional derivative
     \begin{align*}
         \nu f(\Gamma g) &= \left.\frac{d}{dt}\right\rvert_{t=0} f(\Gamma g\gamma(t))\\
         &= \left.\frac{d}{dt}\right\rvert_{t=0} R(\gamma(t))f(\Gamma g)\\
         &= \lim_{t \rightarrow 0}\frac{R(\gamma(t))f(\Gamma g) - R(\gamma(0))f(\Gamma g)}{t}.
     \end{align*}
     Since $\mathcal{S}$ is preserved by $R$, we know have
      $$\frac{R(\gamma(t))f(g) - R(\gamma(0))f(g)}{t} \in \mathcal{S} $$
      for all $t >0$.
      But $\mathcal{S}$ is closed and so we must have $Vf \in \mathcal{S}$. 
    \end{proof}

\end{thm}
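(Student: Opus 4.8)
The plan is to realise a left-invariant vector field as the infinitesimal generator of the regular representation, and then to exploit that $\mathcal{S}$ is simultaneously a linear subspace and topologically closed. Concretely, I would fix $\nu \in \mathfrak{g}$ and choose a smooth curve $\gamma(t)$ in $G$ with $\gamma(0)=e$ whose velocity at the identity is $\nu$. Because the vector field on $X$ is obtained by left-invariant extension followed by the pushforward along the projection $G \to X = \Gamma\backslash G$, its value at a coset $\Gamma g$ is the tangent vector to the curve $t \mapsto \Gamma g\gamma(t)$. Hence, for smooth $f$,
\[
\nu f(\Gamma g) = \left.\frac{d}{dt}\right|_{t=0} f(\Gamma g\gamma(t)) = \left.\frac{d}{dt}\right|_{t=0}\bigl(R(\gamma(t))f\bigr)(\Gamma g),
\]
which identifies the action of $\nu$ with the differential $dR(\nu)$ of the representation.

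The second step is to express this derivative as a limit of difference quotients. For $t\neq 0$ set
\[
q_t := \tfrac{1}{t}\bigl(R(\gamma(t))f - f\bigr).
\]
Since $R(\gamma(t))f \in \mathcal{S}$ by hypothesis and $f \in \mathcal{S}$, and since $\mathcal{S}$ is a linear subspace, we have $q_t \in \mathcal{S}$ for every $t \neq 0$. By construction $q_t$ converges pointwise to $\nu f$ as $t \to 0$.

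The final step, and the one requiring genuine care, is to pass to the limit inside the closed subspace. Closedness of $\mathcal{S}$ is with respect to the $L^2$ topology, so pointwise convergence of $q_t$ to $\nu f$ is not by itself enough: I must upgrade it to convergence in the $L^2$ norm, and this is where the main obstacle lies. I would resolve it using that the spaces $X$ under consideration are compact together with the smoothness of $f$. By Taylor's theorem the error $q_t - \nu f$ is controlled uniformly on $X$ by the second-order behaviour of $f$ along $\gamma$, which is bounded thanks to compactness; a uniform pointwise bound on a space of finite measure then yields $L^2$ convergence (for instance via dominated convergence). Once $q_t \to \nu f$ in $L^2$, closedness of $\mathcal{S}$ forces $\nu f \in \mathcal{S}$. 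Since $\nu f$ is automatically smooth, it lies in $\mathcal{S}\cap C^{\infty}$, and as an arbitrary left-invariant vector field is a linear combination of the basis fields $\nu_j$, linearity of $\mathcal{S}$ gives the statement in full generality.
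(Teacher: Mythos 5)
Your proposal is correct and follows the same route as the paper's own proof: realise $\nu$ as the infinitesimal generator of the regular representation along a curve $\gamma(t)$ through the identity, observe that the difference quotients $q_t = \tfrac{1}{t}\bigl(R(\gamma(t))f - f\bigr)$ lie in $\mathcal{S}$ by invariance and linearity, and conclude by closedness. The one place where you go beyond the paper is the limit step, and it is to your credit: the paper passes directly from the pointwise limit defining $\nu f$ to membership in $\mathcal{S}$, even though closedness of $\mathcal{S}$ is closedness in the $L^2$ topology, so pointwise convergence of $q_t$ is not literally sufficient. You identify this as the genuine obstacle and close it correctly: by Taylor's theorem the remainder $q_t - \nu f$ is bounded by $C\lvert t\rvert$ uniformly on $X$ (the second derivative of $(x,s)\mapsto f(x\gamma(s))$ being bounded on a compact space times a compact interval), and a uniform bound on a finite-measure space upgrades this to $L^2$ convergence, after which closedness applies. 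The only caveat is that your fix invokes compactness of $X$, which is not among the hypotheses of the theorem as stated for a general homogeneous space; it does, however, hold for the Kodaira-Thurston manifolds to which the paper applies the result, and some such assumption (finite volume together with uniform control of the remainder) is needed to make the final step rigorous at all, so your version is the more complete argument.
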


\begin{cor}\label{Cor decomp of solns}
    Let $P$ be a left-invariant linear differential operator on $X = \Gamma \backslash G$, and let $f \in C^{\infty}(X)$ be a smooth function which can be decomposed, with respect to the regular representation, into the sum
    $$f = \sum_{I\in \mathcal{I}} f_I, $$
    with $f_I \in \mathcal{S}_I \cap C^{\infty}$.

    If $f$ is a solution to the differential equation
    $$Pf = 0 $$
    then the functions $f_I$ are also solutions, for all $I \in \mathcal{I}$.
    \begin{proof}
        Decomposing the equation $Pf = 0$ we get
        $$P f = \sum_{I \in \mathcal{I}} P f_I = 0. $$
        Since the spaces $\mathcal{S}_I$ are all closed subspaces of $L^{2}(X)$, by Theorem \ref{S preserved} we can say that $P f_I \in S_I$. Furthermore, the spaces $\mathcal{S}_I$ are all mutually orthogonal and so $P f = 0$ if and only if each of its components are zero, \textit{i.e.} $P f_I = 0$.
    \end{proof}
\end{cor}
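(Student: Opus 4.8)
The plan is to reduce everything to Theorem~\ref{S preserved} together with the orthogonality of the decomposition~\eqref{decomp}. Fix an index $I\in\mathcal I$. Since $P$ is left-invariant it has the form $P=\sum_{|\alpha|\le k}c_\alpha D^{\alpha}$ with $D^{\alpha}=\nu_1^{\alpha_1}\cdots\nu_n^{\alpha_n}$, so it suffices to understand how each monomial $D^{\alpha}$ acts on the smooth part of $\mathcal{S}_I$. By Theorem~\ref{S preserved}, the closed $R$-invariant subspace $\mathcal{S}_I$ satisfies $\nu_j(\mathcal{S}_I\cap C^\infty)\subseteq\mathcal{S}_I$; as $\nu_j f_I$ is again smooth, I can apply this repeatedly along the factors of $D^{\alpha}$ to get $D^{\alpha}f_I\in\mathcal{S}_I\cap C^\infty$, and hence $Pf_I\in\mathcal{S}_I$ by linearity. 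In other words, $P$ respects the pieces of the decomposition.

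Next I would pass from the hypothesis $Pf=0$ to the individual equations $Pf_I=0$. The most transparent way is via the orthogonal projections $\Pi_I\colon L^2(X)\to\mathcal{S}_I$, for which $f_I=\Pi_I f$. Because the sum~\eqref{decomp} is orthogonal and each $\mathcal{S}_I$ is preserved by $R(g)$, the projection $\Pi_I$ commutes with $R(g)$ for all $g\in G$; differentiating $\Pi_I R(\gamma(t))=R(\gamma(t))\Pi_I$ at $t=0$, and using that the bounded operator $\Pi_I$ commutes with the $L^2$-limit defining $dR(\nu)=\nu$, gives $\Pi_I(\nu f)=\nu(\Pi_I f)$ on smooth functions. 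Iterating over the monomials of $P$ shows $\Pi_I(Pf)=P(\Pi_I f)=Pf_I$, so applying $\Pi_I$ to $Pf=0$ yields $Pf_I=0$ for every $I$. Alternatively, granting the termwise identity $Pf=\sum_I Pf_I$, the inclusions $Pf_I\in\mathcal{S}_I$ from the first paragraph together with mutual orthogonality force each component to vanish.

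The step I expect to be the real obstacle is justifying that $P$ commutes with the decomposition, i.e. the passage ``$Pf=\sum_I Pf_I$''. The inclusion $Pf_I\in\mathcal{S}_I$ is a routine iteration of Theorem~\ref{S preserved}, but the series $f=\sum_I f_I$ converges only in $L^2$ while $P$ is an unbounded operator, so term-by-term differentiation is not automatic. Framing the argument through the intertwining property of the projections $\Pi_I$---rather than differentiating the series directly---avoids this difficulty, and is the point I would write out with the most care.
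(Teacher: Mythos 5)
Your proof is correct, and at the crucial step it takes a genuinely different --- and more careful --- route than the paper. The paper's own proof is exactly the argument you mention only in passing at the end: it writes $Pf = \sum_{I} Pf_I$, i.e.\ it applies $P$ term by term to the $L^2$-convergent series, then invokes $Pf_I \in \mathcal{S}_I$ (via Theorem~\ref{S preserved}) and mutual orthogonality to conclude that each component vanishes. The termwise identity is asserted without justification, and this is precisely the obstacle you flag: $P$ is an unbounded operator, so it need not commute with $L^2$-limits. Your substitute --- showing that the orthogonal projections $\Pi_I\colon L^2(X)\to\mathcal{S}_I$ intertwine with $R(g)$ (this uses that $R(g)$ is unitary and maps $\mathcal{S}_I$ \emph{onto} itself, hence also preserves $\mathcal{S}_I^{\perp}$; surjectivity follows by applying invariance to $g^{-1}$ as well), differentiating to get $\Pi_I(\nu f)=\nu(\Pi_I f)$ on smooth vectors, and iterating over the monomials $D^{\alpha}$ of $P$ --- closes this gap: applying the bounded operator $\Pi_I$ to $Pf=0$ directly yields $Pf_I=0$, with no termwise differentiation of the series. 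Two small points to write out in full: the iteration needs $\Pi_I(\nu_{j}\cdots f)$ to be smooth at each stage, which holds because $\Pi_I f = f_I$ is smooth by hypothesis and the identity $\Pi_I(\nu f)=\nu(\Pi_I f)$ propagates smoothness step by step; and identifying the $L^2$-limit of the difference quotients of $f_I$ with the pointwise derivative $\nu f_I$ uses smoothness of $f_I$ (the same fact Theorem~\ref{S preserved} relies on implicitly). What the paper's version buys is brevity; what yours buys is an actual justification of the step the paper leaves implicit, plus the cleaner structural statement $\Pi_I P = P\,\Pi_I$ on smooth vectors, which is the right way to say that left-invariant operators respect the decomposition~\eqref{decomp}.
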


\section{The Kodaira-Thurston manifold}
The $2n+2$-dimensional Kodaira-Thurston manifold $KT^{2n+2}$ is a homogeneous space given by
$$ KT^{2n+2} = \left( H_{2n+1}(\Z) \times \mathbb{Z} \right)\backslash \left(  H_{2n+1}(\R) \times \mathbb{R}\right) $$
where $H_{2n+1}(\mathbb{R})$ denotes the Heisenberg group 
$$ H_{2n+1}(\mathbb{R}) = \left\{\begin{pmatrix}    
    1 & x & z\\
    0_n & \text{\Large{$I_n$}} & y\\
    0 &  0_n & 1
\end{pmatrix}\,\middle|\, x,y \in \mathbb{R}^n,\,  z \in \mathbb{R} \right\} $$
and $H_{2n+1}(\mathbb{Z})$ denotes the discrete subgroup, with $x,y \in \mathbb{Z}^n$, $z \in \mathbb{Z}$. Here $I_n$ is the $n\times n$ identity and $0_n$ is the zero vector $(0,\dots, 0) \in \mathbb{R}^n$.

The tangent bundle of $KT^{2n+2}$ is spanned at every point by the left-invariant vectors
$$\nu_1 = \diff {x_1}, \dots, \nu_n = \diff {x_n},$$
$$ \nu_{n+1} = \diff {y_1} + x_1 \diff z, \dots, \nu_{2n} = \diff{y_{2n}} + x_{2n} \diff z,$$
$$ \nu_{2n+1} = \diff z,\  \nu_{2n+2} = \diff t,  $$
where the variables $x,y,z$ parametrise the Heisenberg group $H_{2n+1}({\R})$ as described above, and $t$ parametrises $\R$. 

The cotangent bundle of $KT^{2n+2}$ is spanned at every point by the dual left-invariant $1$-forms
$$e^1 = d{x_1}, \dots, e^n = d{x_n},$$
$$f^1 = d{y_1}, \dots, f^n = d{y_{2n}},$$
$$e^{n+1} = dz-x_1dy_1-\dots-x_ndy_n,\  f^{n+1} = dt,$$
where the only non zero structure equation is
\[
de^{n+1}=-e^1\wedge f^1-\dots -e^n\wedge f^n. 
\]

The regular representation of $H_{2n+1}\times \R$ on $L^2(KT^{2n+2})$ has an irreducible decomposition, described by
\begin{equation}\label{decomp of KT6}
    L^2(KT^{2n+2}) = \widehat{\bigoplus_{I \in \mathcal{I}}}\mathcal{S}_I \oplus \widehat{\bigoplus_{J\in \mathcal{J}}}\mathcal{T}_{J} 
\end{equation}
where the the first sum is taken over all $I = (p,q,l)$ such that $ p,q \in \Z^n, l\in \Z$ and the second sum is taken over all $J = (q,m,l)$ such that $ m \in \Z\backslash \{0\}, q \in (\Z/m)^n, l\in\mathbb{Z}$. In \eqref{decomp of KT6} $\mathcal{S}_{I}$ is a 1 dimensional space spanned by a single function 
$$ \mathcal{S}_{I} = \mathbb{C} \cdot \left\{ e^{2\pi i(p\cdot x + q \cdot y+lt)}\right\} $$
and $\mathcal{T}_{J}$ is an infinite dimensional space given by
$$\mathcal{T}_{J} = \left\{ e^{2\pi i (q\cdot y + mz +lt)}\sum_{\xi \in \Z^n}\psi(x + \xi) e^{2\pi i m \xi \cdot y} \,|\, \psi \in L^2(\R^n)\right\}. $$
See \cite[Section I.5]{Aus} for a decomposition of the regular representation of $H_{2n+1}(\mathbb R)$ acting on $L^2(H_{2n+1}(\mathbb Z) \backslash H_{2n+1}(\mathbb{R}))$, from which the above decomposition is easily obtained.

We define the map
\begin{align*}
    W_J: L^2(\mathbb{R}^n) \rightarrow \mathcal{T}_J,
\end{align*}
in the natural way. 
This is a generalised version of the Weil-Brezin map on the Heisenberg manifold.
Using $W_J \psi$ to denote a general element of $\mathcal{T}_J$, if we have $W_J \psi\in C^{\infty}(KT^{2n+2})$ then by classical Fourier analysis $\psi \in C^{\infty}(\mathbb{R}^n)$ and the sum over $\xi \in \mathbb{Z}^n$ is absolutely convergent. We can then write
\begin{align*}
    \diff{x_j} W_J \psi &= W_J \left(\diff{x_j} \psi\right)\\
    \left(\diff{y_j} + x_j \diff{z}\right) W_J \psi &= 2\pi i  q_j W_J \psi + 2\pi im W_J (x_j \psi)\\
    \diff z W_J \psi &= 2\pi i m W_J \psi\\
    \diff t  W_J \psi&= 2 \pi i l W_J \psi.
\end{align*}
This shows us that $W_J \psi$ is smooth if and only if the Weil-Brezin map 
$$W_J \left( x^\alpha \diff{x}^\beta\psi \right) $$
converges for all multi-indexes $\alpha,\beta \in \mathbb{N}^n$, where $x^\alpha := x_1^{\alpha_1} x_2^{\alpha_2}\dots x_n^{\alpha_n}$ and similarly $\diff{x}^\beta := \left(\diff{x_1}\right)^{\beta_1} \left(\diff{x_2}\right)^{\beta_2} \dots \left(\diff{x_n}\right)^{\beta_n}$. In other words, we must have $\psi \in \mathcal{S}(\mathbb{R}^n)$, where 
$$S(\R^n) := \left\{f \in C^{\infty}(\mathbb{R}^n) \,\middle|\, \forall \alpha, \beta \in \N^n  \sup_{x \in \mathbb{R}^n} \left\| x^\alpha \diff{x}^{\beta} f \right\| < \infty \right\} $$
is the space of Schwartz functions.

We can therefore write
$$\mathcal{T}_J \cap C^{\infty}(KT^{2n+2}) = \left\{ W_J \psi \,|\, \psi \in S(\R^n)\right\} $$
when restricting to the space of smooth functions.

Conversely, the space $\mathcal{S}_I$ is unchanged when restricting to smooth functions, \textit{i.e.}, we have $\mathcal{S}_I \cap C^{\infty}(KT^{2n+2}) = \mathcal{S}_I$.

\section{The Kodaira Dimension of $KT^{2n+2}$}

In \cite{CZ}, Chen and Zhang introduced the notion of Kodaira dimension for almost complex manifolds. We now recall their definition. Given a $2n$-dimensional almost complex manifold $(X,J)$, denote the \textit{canonical bundle}, \textit{i.e.}, the bundle of $(n,0)$-forms, by $K_J$. We define the $Nth$-\textit{plurigenus} of an almost complex manifold $(X,J)$, as in \cite[Definition 1.2]{CZ}, to be the space of $\delbar$-closed sections of $K_J^{\otimes N}$ $$H^0(X,K_J^{\otimes N}) = \left\{ s \in K_J^{\otimes N} \,\middle| \,\delbar s = 0 \right\}.$$ 
Then, denoting
$$P_N = \dim H^0(X,K_J^{\otimes N}) $$
the Kodaira dimension of $(X,J)$ is defined to be
$$\kappa_J(X) = \begin{cases}
    -\infty & P_N = 0,\,\, \forall N \geq 0,\\
    \limsup_{N\rightarrow \infty} \frac{\log P_N}{\log N} & \text{otherwise.}
    
\end{cases} $$

We will now demonstrate how it is possible to calculate the Kodaira dimension of the Kodaira-Thurston manifold in any dimension and given any left-invariant almost complex structure. This same argument could be applied to calculate the Kodaira dimension for other solvmanifolds with left-invariant almost complex structure.   

\begin{thm}\label{kodaira-dimension-KT}
    On the $(2n+2)$-dimensional Kodaira-Thurston manifold $KT^{2n+2}$ endowed with a left-invariant almost complex structure $J$, there are only two possible values for the Kodaira dimension: $0$ or $-\infty$.
\end{thm}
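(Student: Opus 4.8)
The plan is to trivialise the pluricanonical bundles and reduce $\delbar s=0$ to a first–order system of left-invariant equations, which can then be attacked piece-by-piece through the representation-theoretic decomposition \eqref{decomp of KT6}. Since $KT^{2n+2}$ is parallelisable and $J$ is left-invariant, $K_J$ has a global nowhere-zero left-invariant section $\Omega=\phi^1\wedge\dots\wedge\phi^{n+1}$, where $\phi^1,\dots,\phi^{n+1}$ is a constant-coefficient coframe of $(1,0)$-forms, so $\Omega^{\otimes N}$ trivialises $K_J^{\otimes N}$. Writing a section as $s=f\,\Omega^{\otimes N}$ and using that $\delbar\Omega=\bar\eta\wedge\Omega$ for a left-invariant $(0,1)$-form $\bar\eta=\sum_k c_k\bar\phi^k$ (the only structure equation being $de^{n+1}=-\omega_0$ with $\omega_0=\sum_i e^i\wedge f^i$), the equation $\delbar s=0$ becomes
\[
\overline{Z}_k f=-N c_k f,\qquad k=1,\dots,n+1,
\]
where $\overline{Z}_1,\dots,\overline{Z}_{n+1}$ is the dual frame of $(0,1)$-vector fields. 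Each operator $\overline{Z}_k+Nc_k$ is left-invariant, so Corollary~\ref{Cor decomp of solns} applies: every smooth solution decomposes as $f=\sum_I f_I+\sum_J f_J$ with each component again a solution. Hence $P_N$ counts the non-zero components, and it suffices to treat the $\mathcal S_I$ and $\mathcal T_J$ pieces separately.

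The heart of the argument is to show that no $\mathcal T_J$ piece (those with central weight $m\neq0$) can contribute, and I would argue by a commutator dichotomy. From the structure equations, $[\overline{Z}_j,\overline{Z}_k]=\gamma_{jk}\,\del_z$ with $\gamma_{jk}=\omega_0(\pi\overline{Z}_j,\pi\overline{Z}_k)$, where $\pi$ is the projection onto the horizontal space $\langle\nu_1,\dots,\nu_{2n}\rangle$. For a common solution one has $[\overline{Z}_j,\overline{Z}_k]f=0$, hence $\gamma_{jk}\,\del_z f=0$. If some $\gamma_{jk}\neq0$ then $\del_z f=0$, which kills every $\mathcal T_J$ component since $\del_z$ acts there as $2\pi i m\neq0$. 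If instead $\gamma\equiv0$, then $\pi(T^{0,1})$ is isotropic for $\omega_0$, so the $n+1$ horizontal projections are linearly dependent and there is a non-trivial relation $\sum_k\lambda_k\overline{Z}_k=A\,\del_z+B\,\del_t=:W$, a non-zero central field lying in $T^{0,1}$.

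For this case the key computation is that $C:=\sum_k\lambda_k c_k$ vanishes. Indeed $C=\bar\eta(W)$, and since $W\in T^{0,1}$ annihilates every $(1,0)$-form while $\iota_W\omega_0=0$, one gets $\iota_W\,d\Omega=0$; taking the $(n+1,0)$-component gives $\iota_W\,\delbar\Omega=C\,\Omega=0$, so $C=0$. Applying $W$ to a solution then yields $2\pi i(Am+Bl)f=-NCf=0$, i.e. $Am+Bl=0$. Because $W\in T^{0,1}$ and $W\neq0$ force $\operatorname{Im}(A\bar B)\neq0$ (otherwise $\overline{W}$ would be proportional to $W$, putting $W$ in $T^{1,0}\cap T^{0,1}=0$), the real and imaginary parts are independent, so $m=0$ — again no $\mathcal T_J$ piece survives. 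I expect this vanishing of $C$ to be the main obstacle: it is where the specific geometry of $KT^{2n+2}$ (a single central structure equation together with the extra circle direction) is essential, and the computation must be arranged so that the centrality of $W$ is exploited cleanly.

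It then remains to bound the $\mathcal S_I$ contribution. On $\mathcal S_I=\C\cdot e^{2\pi i(p\cdot x+q\cdot y+lt)}$ each $\overline{Z}_k$ acts as a scalar $\mu_k(I)$ linear in $I=(p,q,l)\in\Z^{2n+1}$, so $f_I$ solves the system iff $\mu_k(I)=-Nc_k$ for all $k$; the solutions form a coset of $\ker\mu$. Now $\mu_k(I)=0$ for all $k$ means $\delbar e^{2\pi i(\,\cdot\,)}=0$, and a real linear exponential whose differential is $\delbar$-closed must be constant (its differential lies in $A^{1,0}$ yet is real, hence zero), so $\ker\mu\cap\Z^{2n+1}=\{0\}$ and there is at most one admissible $I$. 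Combining, $P_N\le1$ for every $N$. Finally, $\delbar(s^{\otimes k})=0$ whenever $\delbar s=0$, so a single non-zero plurigenus forces $P_{kN_0}\ge1$ for all $k$; hence either all $P_N=0$, giving $\kappa_J=-\infty$, or infinitely many $P_N$ equal $1$ and $\limsup_{N\to\infty}\frac{\log P_N}{\log N}=0$, giving $\kappa_J=0$.
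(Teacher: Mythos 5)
Your proposal is correct, and it shares the paper's overall skeleton --- trivialise $K_J^{\otimes N}$ by the left-invariant section $\Omega$, reduce $\delbar s=0$ to the left-invariant system $\overline{Z}_kf=-Nc_kf$, and split solutions across the decomposition \eqref{decomp of KT6} via Corollary \ref{Cor decomp of solns} --- but both key steps are executed by genuinely different arguments. For the central lemma (no solutions in $\mathcal{T}_J$), the paper normalises the frame so that $\Vbar_1$ has no $\partial_{x_i}$-components and splits on whether $\Vbar_1$ has a horizontal part: if yes, it works through the Weil--Brezin model of $\mathcal{T}_J$ and uses that a Schwartz function annihilated by a non-constant affine factor $B(\mathbf{x})$ must vanish; if no, $\Vbar_1$ is central, $[V_j,\Vbar_1]=0$ gives $C_1=0$, and then $l=m=0$. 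Your dichotomy is instead on the commutators $[\overline{Z}_j,\overline{Z}_k]=\gamma_{jk}\partial_z$: when some $\gamma_{jk}\neq 0$, the identity $[\overline{Z}_j,\overline{Z}_k]f=0$ (valid on any common solution of the system) forces $\partial_z f=0$, which kills every $\mathcal{T}_J$-component with no Schwartz-function analysis whatsoever; when all $\gamma_{jk}=0$, isotropy of $\pi(T^{0,1})$ for $\omega_0$ produces your central $(0,1)$-field $W$, and your contraction/bidegree argument ($\iota_W d\Omega=0$, whose $(n+1,0)$-part gives $C\,\Omega=0$) replaces the paper's bracket computation, after which the two proofs coincide ($Am+Bl=0$ plus $\operatorname{Im}(A\overline{B})\neq 0$ yields $m=l=0$, contradicting $m\neq 0$). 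In the $\mathcal{S}_I$ sector, the paper exhibits an explicit real matrix of maximal rank and derives the infinitude of good $N$ from rationality of the unique real solution under the scaling $(p,q,l)\mapsto (Np,Nq,Nl)$, whereas you prove $\ker\mu=0$ invariantly (a real left-invariant $1$-form of type $(1,0)$ vanishes) to get $P_N\le 1$, and then obtain infinitude from the tensor-power trick: $\delbar s=0$ implies $\delbar(s^{\otimes k})=0$, so $P_{N_0}\ge 1$ forces $P_{kN_0}=1$ for all $k$. Your choices buy simplicity and generality --- Case A uses nothing about $\mathcal{T}_J$ beyond the fact that $\partial_z$ acts there as $2\pi i m\neq 0$, and the tensor-power argument for infinitude works on any almost complex manifold --- while the paper's route buys a finer, directly checkable criterion (existence and rationality of the solution of the $N=1$ linear system) for deciding which of the two values $\kappa_J$ actually takes on a given $J$.
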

\begin{proof}
For a general left-invariant almost complex structure, we can assume that the space of $T^{1,0}KT^{2n+2}$ is spanned at each point by the $(1,0)$-vectors $V_1, \dots, V_{n+1}$ given by
$$V_j = \sum_{i = 1}^{2n+2} A_{ij} \nu_i $$
for some choice of $A_{ij} \in \mathbb{C}$. Let $\phi^j$ be the dual $(1,0)$-forms, then in order to find the plurigenera, we must find $f \in C^{\infty}(KT^{2n+2})$ such that
$$\delbar \left(f \left( \phi^{1 2 \dots n+1}\right)^{\otimes N}\right) = \left( \sum_{j} \Vbar_j f  \phibar^j\right)\otimes \left(\phi^{1 2 \dots n+1}\right)^{\otimes N} + f \delbar \left(\phi^{1 2 \dots n+1}\right)^{\otimes N}= 0. $$
Maintaining full generality, we can assume that
$$ \delbar \phi^{1 2 \dots n+1} = \left(\sum_{j=1}^{n+1} C_j \phi^{\overline{j}}\right) \wedge \phi^{12 \dots n+1},  $$
where $C_j \in \mathbb{C}$ are constant.
The equations to solve are therefore
\begin{equation}\label{Kod dim eqns}
    \Vbar_j f + N C_j f  = 0. 
\end{equation}

The operators $\Vbar_j + NC_j$ are all left-invariant and so, if we find the solutions when $f \in \mathcal{S}_I$ or $f \in \mathcal{T}_J$, for some $I \in \mathcal{I}$ and $J\in \mathcal{J}$, all other solutions can be found through linear combinations of these. In the two lemmas below, we consider first the solutions in $\mathcal{S}_I$ followed by the solutions in $\mathcal{T}_J$.

\begin{lem}\label{solns in S}
    If there exists a solution to \eqref{Kod dim eqns} in $\mathcal{S}_I$ for some $I \in \mathcal{I}$, then it is unique and a solution exists for infinitely many values of $N$.  
\end{lem}
\begin{proof}
For any value of $I = (p,q,l)$, with $l \in \mathbb{Z}$ and $p,q \in \mathbb{Z}^{n}$, we can look for a solution of the form
$$ e^{2\pi i ( p \cdot x + q \cdot y + lt)} \in \mathcal{S}_I. $$
Substituting this into $\eqref{Kod dim eqns}$ and setting $A_{jk} = a_{j,k} + i b_{j,k}$, $C_j = c_j + i c_{j+n+1}$ we find that we have a solution if and only if the following is satisfied:
\begin{equation}\label{matrix S eqn}
2\pi \begin{pmatrix}
       -b_{1, 1} & \dots & -b_{1, 2n+1 }\\
    \vdots & & \vdots \\
    -b_{n+1, 1} & \dots & -b_{n, 2n+1}\\
    a_{1, 1} & \dots & a_{1, 2n+1 }\\
    \vdots & & \vdots \\
    a_{n+1, 1} & \dots & a_{n, 2n+1}\\
\end{pmatrix} \begin{pmatrix}
    p_1 \\ \vdots \\ p_{n} \\ q_1 \\ \vdots \\ q_{n}\\ l 
\end{pmatrix} = N \begin{pmatrix}
    c_1 \\ \vdots \\ c_{2n+2}
\end{pmatrix}.    
\end{equation}
First, we check for any real solutions $(p,q,l) \in \mathbb{R}^{2n+1}$ in the case when $N = 1$. Since the vectors $V_1, \dots, V_{n+1}, \Vbar_1, \dots, \Vbar_{n+1}$ are linearly independent we know that the matrix on the left hand side of \eqref{matrix S eqn} has maximal rank, therefore if a solution $(p,q,l)$ exists, it is unique. Furthermore, it implies that $(Np, Nq, Nl)$ is a solution for general $N$. 

If the solution is rational, \textit{i.e.}, contained in $\mathbb{Q}^{2n-1}$ then we can choose a value of $N$ such that $(Np, Nq, Nl)\in \mathbb{Z}^{2n+1}$ and we have a solution to \eqref{Kod dim eqns} for this $N$. In fact, there are infinitely many possible choices for this value of $N$.

If instead the solution is irrational or does not exist, then there are no solutions to \eqref{Kod dim eqns} for any choice of $N$.
\end{proof}
\begin{lem}\label{solns in T}
    There are no solutions to \eqref{Kod dim eqns} contained in $\mathcal{T}_J$ for any $J\in \mathcal{J}$.
\end{lem}
\begin{proof}
    Given any fixed left-invariant $J$, it is always possible to choose the vectors $V_1, \dots, V_{n+1}$ such that $A_{i 1}= \dots = A_{i n} = 0$.  
    The equation \eqref{Kod dim eqns}, when $j = 1$, can therefore be written as
\begin{align*}
&\Big[ A_{1\, n+1} \left(\diff{y_1} + x_1  \diff z \right) + \dots +  A_{1\, 2n} \left(\diff{y_n} + x_n  \diff z \right)+\\
 &+ A_{1\, 2n+1} \diff z  + A_{1\, 2n+2} \diff t  + NC_1\Big] f = 0. 
\end{align*}
    
    Let $f$ be a function in $\mathcal{T}_J$, \textit{i.e.}, $f$ is of the form
    $$e^{2\pi i (lt + q\cdot y + mz)}\sum_{\xi \in \Z^n}\psi(x + \xi) e^{2\pi i m \xi \cdot y} $$
    for some Schwartz function $\psi \in S(\mathbb R^n)$. Substituting this into the above equation, we see that if $f$ is a solution, then $\psi$ must satisfy
    $$B(\mathbf x) \psi(\mathbf x) = 0$$
    for all $\mathbf x = (x_1, \dots, x_n) \in \mathbb{R}^n$, where we define
    $$B(\mathbf x) := 2\pi i \left[A_{1\, n+1} (q_1 + mx_1) + \dots + A_{1\, 2n} (q_n + m x_n) + A_{1\, 2n+1} m + A_{1\, 2n+2}l\right] + NC_1.$$
    This implies we must have $B(\mathbf x) = 0$ for all $\mathbf x$ except when $\psi(\mathbf x) = 0$. 
    
    In the case when at least one of $A_{1\, n+1}, \dots, A_{1\, 2n}$ are non-zero, $B \neq 0$ on a dense subset of $\mathbb{R}^n$ and thus, by continuity, $\psi = 0$ everywhere, \textit{i.e.}, there are no non-trivial solutions.

    In the case when $A_{1\, n+1} = \dots = A_{1\, 2n} = 0$, we are instead left with
    $$ B = 2\pi i (A_{1\, 2n+1}m+ A_{1\, 2n+2} l) + NC_1. $$
    Note that $[V_j, \Vbar_1] = 0$ for all $j$. Thus, by the relationship between the Lie bracket and the exterior derivative, we conclude that $d\phi^j$ has no $\phi^{j\overline 1}$ component. Specifically, this means $d\phi^{12\dots n+1}$ has no $\phi^{12 \dots n+1} \wedge \overline{\phi}^1$ component and so $C_1 = 0$.
    Furthermore, since $V_1$ and $\Vbar_1$ must be linearly independent, $A_{1\, 2n+1}, A_{1\, 2n+2} \in \mathbb{C}$ cannot be real multiples of each other. All of this together means that, from $B = 0$, we can conclude that $l = m = 0$, but this contradicts the assumption that $m$ is non-zero. Therefore, $\mathcal{T}_J$ contains no solutions to $\eqref{Kod dim eqns}$. 
\end{proof}

Combining these two lemmas, we conclude that either $P_N = 1$ for an infinite number of choices of $N$ and therefore $\kappa_J = 0$, or $P_N = 0$ for all $N$ and therefore $\kappa_J = -\infty$.
\end{proof}

\section{Spectrum of the Laplacian}\label{spectrum}

In \cite{DS}, Denhinger and Singhof derive the spectrum of the Laplacian along with the corresponding eigenfunctions, on the Heisenberg manifold given by the quotient $H_{2n+1}(\Z) \backslash H_{2n+1}(\R)$. The description they gave made use of the Hermite polynomials.
\begin{defn}
    The Hermite functions are smooth maps $F_h:\R \rightarrow \R$
    defined for all $h \in \mathbb{N}_0$ by
    $$F_h(s) = (-1)^h e^{\frac{s^2} 2 } \frac{d^h}{ds^h}e^{-s^2}.$$
    These functions satisfy the following identities, which we shall make use of in Section \ref{spectrum}
   \begin{equation} \begin{split}\label{Hermite}
        F'_h(s) &= s F_h(s) -F_{h+1}(s),\\
        F'_h(s) &= 2hF_{h-1}(s) - s F_{h}(s),\\
        F''_h (s) &= s^2 F_h (s) - (2h+1) F_h (s).
    \end{split}
    \end{equation}
\end{defn}
The Kodaira-Thurston manifold can be written as the direct product of the Heisenberg manifold with a circle, and so the spectrum on $KT^{2n+2}$ can derived by an identical argument. 
\begin{thm}
    Consider the Laplacian acting on $C^{\infty}(KT^{2n+2})$ given by
    $$ a^2 \diffsq{t} +  b_i^2 \diffsq{x_i} +  c_i^2 \left(\diff{y_i} + x_i \diff{z} \right)^2 +  d^2 \diffsq{z},  $$
    for some positive choice of $ a,  d \in \mathbb{R}, b,c \in \R^{n}$. The eigenfunctions of this Laplacian are given by 
    $$f_{I} = f_{p,q,l} = e^{2\pi i (p\cdot x + q \cdot y+lt)} $$
    for any $I \in \mathcal{I}$ and
    $$g_{J,h}= g_{q,m,l,h} = e^{2\pi i (q\cdot y + mz+ lt)}\prod_{j=1}^n \sum_{\xi \in \Z} F_{h_j} \left(\sqrt{2\pi |m| \frac{ c_i} { b_i} } \left(x_j + \frac{q_j}{m}+\xi\right)\right) e^{2\pi i m \xi y_j} $$
    for any $J \in \mathcal{J}$ and any $h=(h_1,\dots,h_n) \in \N^n_0$.

    Specifically, we have
        $$\Delta_d f_{I} = -4\pi^2 \left( a^2 l^2 +  b_1^2  p_1^2 + \dots + b_n^2  p_n^2 + c_1^2 q_1^2 + \dots +  c_n^2 q_n^2 \right)f_{I} $$
        $$\Delta_d g_{J,h} = -\left[2\pi |m|(2 c_1^2 (h_1 + 1) + \dots + 2 c_n^2 (h_n + 1) ) + 4\pi^2 ( a^2 l^2 +  d^2 m^2)\right]g_{J,h}.   $$
\end{thm}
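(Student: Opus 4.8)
The plan is to verify that the proposed functions are genuine eigenfunctions by direct substitution, reducing the problem to a one-dimensional computation via the product structure. First I would treat the two families separately. For $f_I = e^{2\pi i(p\cdot x + q\cdot y + lt)}$, the computation is purely elementary: since $f_I \in \mathcal{S}_I$, the operator $\diff z$ annihilates it (there is no $z$-dependence), and each of $\diff{t}$, $\diff{x_i}$, and $\diff{y_i} + x_i\diff z$ acts as multiplication by $2\pi i l$, $2\pi i p_i$, and $2\pi i q_i$ respectively (the $x_i\diff z$ term contributes nothing since $\diff z f_I = 0$). Squaring these and summing against the coefficients $a^2, b_i^2, c_i^2, d^2$ gives the stated eigenvalue immediately.

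The substantive case is $g_{J,h}$. Here I would exploit that the Laplacian is a sum of commuting one-dimensional pieces and that $g_{J,h}$ factors as $e^{2\pi i(q\cdot y + mz + lt)}$ times a product $\prod_{j=1}^n \Theta_j$ over the $n$ Heisenberg directions, where $\Theta_j = \sum_{\xi\in\Z} F_{h_j}(\sqrt{2\pi|m|c_i/b_i}(x_j + q_j/m + \xi))e^{2\pi i m\xi y_j}$. Acting with $\diff z$ gives $2\pi i m$ and $\diff t$ gives $2\pi i l$, so the $a^2\diffsq t$ and $d^2\diffsq z$ terms contribute $-4\pi^2(a^2 l^2 + d^2 m^2)$. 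The crux is showing that the coupled operator $b_i^2\diffsq{x_i} + c_i^2(\diff{y_i} + x_i\diff z)^2$ acts on $\Theta_i$ (times the $y$-exponential) with eigenvalue $-2\pi|m|\cdot 2c_i^2(h_i+1)$. On a single summand, $\diff z$ again yields $2\pi i m$, so $\diff{y_i} + x_i\diff z$ becomes $\diff{y_i} + 2\pi i m x_i$; applying it twice and adding $b_i^2\partial^2_{x_i}$ produces, after setting $s = \sqrt{2\pi|m|c_i/b_i}(x_i + q_i/m + \xi)$, an expression matching the combination $F''_{h} - s^2 F_h$ from the third Hermite identity in \eqref{Hermite}, which equals $-(2h+1)F_h$. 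I would carefully track the constants so that the rescaling $x_i \mapsto s$ converts $b_i^2\partial^2_{x_i}$ and the $c_i^2$ term into the operator $2\pi|m|b_i c_i(\partial_s^2 - s^2)$, yielding the factor $2\pi|m|c_i^2$.

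The main obstacle is twofold. First, I must confirm that the theta-like sum $\sum_\xi$ is compatible with the $y_i$-exponential factors so that the operator $\diff{y_i} + x_i\diff z$ acts diagonally across the whole sum; this is exactly the mechanism that makes $g_{J,h}$ lie in $\mathcal{T}_J$ and be well-defined on the quotient, and I would check that the shift structure $x_i + q_i/m + \xi$ together with the $e^{2\pi i m\xi y_j}$ factors is preserved. Second, the bookkeeping of the scaling constant $\sqrt{2\pi|m|c_i/b_i}$ is where sign and normalisation errors are most likely; I expect that verifying $b_i^2\partial_{x_i}^2 + c_i^2(2\pi i m x_i + \text{const})^2$ collapses cleanly onto $2\pi|m|c_i^2(\partial_s^2 - s^2)$ will require the precise value of the dilation and will be the step most worth writing out in full. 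Once the single-direction eigenvalue $-2\pi|m|\cdot 2c_i^2(h_i+1)$ is established, summing over $i=1,\dots,n$ and adding the $t$ and $z$ contributions gives the claimed eigenvalue for $g_{J,h}$.
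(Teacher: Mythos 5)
Your strategy --- verifying the eigenfunction property by direct substitution, splitting off the $t$ and $z$ directions, and reducing each Heisenberg direction to a one-dimensional harmonic oscillator through the dilation $s_i = \sqrt{2\pi|m|c_i/b_i}\,(x_i + q_i/m + \xi)$ and the third identity of \eqref{Hermite} --- is sound, and it is a genuinely different route from the paper's: the paper offers no computation for this theorem at all, instead citing the spectral analysis of \cite{DS} on the Heisenberg manifold together with the product structure of $KT^{2n+2}$. Your treatment of $f_I$ is complete and correct, and the mechanism you single out for $g_{J,h}$ (that $\diff{y_i} + x_i\diff z$ acts on each theta-summand as multiplication by $2\pi i m(x_i + q_i/m + \xi)$, so the second-order operator acts diagonally across the sum over $\xi$) is exactly the right point to check.

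The genuine problem is the constant in your crux step, and the inconsistency is already visible inside your own writeup. As you say, the dilation converts $b_i^2\diffsq{x_i} + c_i^2\left(\diff{y_i}+x_i\diff z\right)^2$ into $2\pi|m|\,b_ic_i\left(\partial_{s_i}^2 - s_i^2\right)$, since $b_i^2\lambda_i^2 = 2\pi|m|b_ic_i = 4\pi^2m^2c_i^2/\lambda_i^2$ for $\lambda_i = \sqrt{2\pi|m|c_i/b_i}$. But then \eqref{Hermite} gives $(\partial_{s_i}^2 - s_i^2)F_{h_i} = -(2h_i+1)F_{h_i}$, so the per-direction contribution is $-2\pi|m|\,b_ic_i(2h_i+1)$ --- not the ``factor $2\pi|m|c_i^2$'' you assert, and not the target $-2\pi|m|\cdot 2c_i^2(h_i+1)$ you set out to establish: no bookkeeping of the dilation can turn the oscillator eigenvalue $2h_i+1$ into $2(h_i+1)$, nor $b_ic_i$ into $c_i^2$. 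What your method actually proves is
$$\Delta_d\, g_{J,h} = -\left[2\pi|m|\left(b_1c_1(2h_1+1)+\dots+b_nc_n(2h_n+1)\right) + 4\pi^2\left(a^2l^2+d^2m^2\right)\right]g_{J,h},$$
and this corrected eigenvalue, rather than the one displayed in the theorem, is the one consistent with the paper's own Proposition \ref{W relations}: writing $b_i^2\diffsq{x_i} + c_i^2\left(\diff{y_i}+x_i\diff z\right)^2 = \tfrac12\left(U_i\overline{U_i}+\overline{U_i}U_i\right)$, the ladder relations there give exactly $-2\pi|m|b_ic_i(2h_i+1)$ on $g_{J,h}$. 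So your plan proves a corrected statement but cannot close as announced; you should flag the discrepancy instead of asserting the printed constant. A smaller gap: the theorem claims these are \emph{the} eigenfunctions, so besides verifying that each $f_I$ and $g_{J,h}$ is an eigenfunction you need completeness of the family, which follows from the decomposition \eqref{decomp of KT6} together with completeness of the Hermite functions in $L^2(\R^n)$; your sketch omits this step.
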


Note that the space $\mathcal{S}_I$ contains only complex multiples of the function $f_I$, while the space $\mathcal{T}_J$ contains the functions $g_{J,h}$ for all $h \in \mathbb{N}$ and we have the following result.

\begin{cor}\label{basis of T}
    Given any fixed index $J \in \mathcal{J}$, the functions $(g_{J,h})_{h \in \N_0^n}$ form an orthogonal basis of $\mathcal{T}_{J}$. 
    \begin{proof}
        It is well-known that on a compact manifold the eigenfunctions of the Laplacian, or indeed any self adjoint elliptic operator, form an orthogonal basis of $L^2(M)$ (see \cite[Theorem 14, Ch. XI]{Pal}). The corollary then follows from the fact that $\mathcal{T}_J$ is orthogonal to $f_{I}$ and $g_{J',h}$ whenever $J' \neq J$, but contains $g_{J',h}$ if $J' = J$.
        \end{proof}
\end{cor}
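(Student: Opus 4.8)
The plan is to establish that $(g_{J,h})_{h\in\N_0^n}$ is both an orthogonal family and a complete family in $\mathcal{T}_J$. I would first invoke the general spectral fact for compact Riemannian manifolds, as cited: the eigenfunctions of the elliptic self-adjoint Laplacian $\Delta_d$ form an orthogonal basis of the whole of $L^2(KT^{2n+2})$. The $g_{J,h}$ and the $f_I$ together constitute such a full system of eigenfunctions, so everything I need will be extracted by restriction to the single summand $\mathcal{T}_J$.

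The key steps, in order, are as follows. First I would record orthogonality: since $\Delta_d$ is self-adjoint, eigenfunctions for distinct eigenvalues are automatically $L^2$-orthogonal, but I must be careful because the eigenvalue of $g_{J,h}$ depends only on $|m|$ and $h$, so two indices could in principle collapse to the same eigenvalue. The cleaner route is to use the decomposition \eqref{decomp of KT6}: distinct irreducible summands $\mathcal{T}_{J'}$ and $\mathcal{T}_J$ (and all the $\mathcal{S}_I$) are mutually orthogonal as closed subspaces of $L^2$, and within a fixed $\mathcal{T}_J$ the functions $g_{J,h}$ correspond under the Weil--Brezin isometry $W_J$ to the Hermite functions $\prod_j F_{h_j}(\,\cdot\,)$ on $\R^n$, which are classically orthogonal; this handles orthogonality without worrying about eigenvalue multiplicities. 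Second, for completeness I would argue by contradiction: suppose some $F\in\mathcal{T}_J$ is orthogonal to every $g_{J,h}$. Because the full system $\{f_I\}\cup\{g_{J',h}\}$ is a basis of $L^2(KT^{2n+2})$ and $\mathcal{T}_J$ is orthogonal to every $f_I$ and to every $g_{J',h}$ with $J'\neq J$, the function $F$ would then be orthogonal to the entire basis, forcing $F=0$. Hence the $g_{J,h}$ span a dense subspace of $\mathcal{T}_J$, and being orthogonal they form an orthogonal basis.

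The main obstacle I anticipate is verifying cleanly that each $g_{J,h}$ genuinely lies in $\mathcal{T}_J$ (rather than merely in $L^2$), i.e.\ that it is of the prescribed Weil--Brezin form with a Schwartz profile, and that the assignment $\prod_j F_{h_j}\mapsto g_{J,h}$ is compatible with the map $W_J$. This is where the explicit shape of $\mathcal{T}_J$ from Section 3 must be matched term-by-term against the stated eigenfunction: the factor $e^{2\pi i(q\cdot y+mz+lt)}$ and the sum $\sum_{\xi}(\cdots)e^{2\pi i m\xi\cdot y}$ agree with $W_J\psi$ for $\psi(\mathbf{x})=\prod_j F_{h_j}(\sqrt{2\pi|m|c_i/b_i}\,(x_j+q_j/m))$, and since Hermite functions are Schwartz, $\psi\in S(\R^n)$, so $g_{J,h}\in\mathcal{T}_J\cap C^\infty$. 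Once this identification is in hand, the orthogonality and completeness of the Hermite functions on $L^2(\R^n)$ transfer through the (suitably normalised) isometry $W_J$, and the corollary follows. A secondary minor point to address is that the orthogonality of the $g_{J,h}$ across different $h$ should be stated as orthogonality, not orthonormality, which matches the Hermite functions' known $L^2$-norms and the word \emph{orthogonal} in the statement.
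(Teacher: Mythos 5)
Your proposal is correct and is essentially the paper's own argument: both invoke the spectral theorem for the self-adjoint elliptic Laplacian to obtain a full orthogonal eigenbasis of $L^2(KT^{2n+2})$ and then cut down to $\mathcal{T}_J$ using the mutual orthogonality of the summands in \eqref{decomp of KT6}, which is exactly your completeness-by-contradiction step. The extra care you take --- verifying $g_{J,h}\in\mathcal{T}_J$ via the Schwartz property of the Hermite functions, and deducing orthogonality for distinct $h,h'$ from the classical orthogonality of Hermite functions transported through the isometry $W_J$ rather than from distinctness of eigenvalues (which can genuinely fail, e.g.\ when $c_1=c_2$ the indices $h=(1,0,\dots)$ and $h'=(0,1,\dots)$ give the same eigenvalue) --- only sharpens details that the paper's two-line proof leaves implicit.
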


\begin{prop}\label{W relations}
    On the manifold $KT^{2n+2}$ define the following left-invariant frame on the tangent bundle
    $$U_j =  b_j \diff{x_j} + i  c_j \left[ \diff{y_j} + x_j \diff{z}\right], \quad \quad U_{n+1} =  a \diff{t} + i d\diff{z},$$
    where $j = 1, \dots, n$.

    The functions $g_{J,h}$ satisfy the following relations, for all $I \in \mathcal{I}$, $h = (h_1, \dots, h_n) \in \mathbb{N}_0^{n}$.
    $$U_{n+1} g_{J,h} = 2\pi i \left( a l + i d m \right) g_{J,h}, \quad \quad \overline{U_{n+1}} g_{J,h} = 2\pi i \left(  a l - i  d m \right) g_{J,h}. $$
    If $m > 0$ then, for $j = 1, \dots, n$,
    $$ U_j g_{J,h} = -\sqrt{2\pi m b_j c_j} \,g_{J,h+e_j}, \quad \quad \overline{U_j} g_{J,h} = 2h_j \sqrt{2\pi m b_j c_j}\,g_{I, h-e_j}.  $$
    If $m<0$ then, for $j = 1, \dots, n$,
    $$U_j g_{J,h} = 2h_j \sqrt{-2\pi m b_j c_j}\,g_{J,h-e_j}, \quad \quad \overline{U_j} g_{J,h} = - \sqrt{-2\pi m b_j c_j} \, g_{J,h+e_j}.$$
    Here we use $e_j$ to denote the element of $\mathbb{N}_0^n$ with a 1 in the $jth$ position and zeros in all other positions, \textit{i.e.}, $h+e_j = (h_1, \dots, h_j + 1, \dots, h_n)$.
\begin{proof}
    The relations for $U_{n+1}$ and $\overline{U_{n+1}}$ are clear from the definition of $g_{J,h}$. To prove the relations for $U_{j}$ and $\overline{U_{j}}$ with $j = 1, \dots, n$, we shall assume $m > 0$. The case when $m < 0$ follows from an identical argument.

    For simplicity of notation, we shall substitute $s_j = \sqrt{2\pi m \frac{c_j}{b_j}}(x_j + \xi + \frac{q_j}{m})$, so that
    $$ g_{J,h} = e^{2\pi i ( q\cdot y + mz+lt)} \prod_{i = 1}^{n} \sum_{\xi \in \Z} F_{h_i}(s_i)e^{2\pi i m \xi y_i}.$$

    Applying $U_j$ to $g_{J,h}$ and writing $\diff{x_j} = \sqrt{2\pi m \frac{ c_j}{ b_j}} \diff{s_j}$, we see that
    \begin{align*}
        \begin{split} 
        U_{j}g_{J,h} &= e^{2\pi i( q\cdot y + mz +lt)} 
        \sum_{\xi \in \Z}
                \sqrt{2\pi m  b_j  c_j}\left[\frac{d}{ds_j}F_{h_j}(s_j) - s_j F_{h_j}(s_j)\right]e^{2\pi i m\xi y_j}\\
        & \quad \quad \quad \quad\quad \quad \quad\cdot 
        \prod_{i \neq j}
           \left( \sum_{\xi \in \Z}
                 F_{h_i}(s_i)e^{2\pi i m \xi y_i}\right)\\
        &= -\sqrt{2\pi m b_j c_j} \, e^{2\pi i( q\cdot y + mz+lt)} 
        \sum_{\xi \in \Z}
                 F_{h_j +1}(s_j) e^{2\pi i m\xi y_j}\\
        & \quad \quad \quad \quad\quad \quad \quad \cdot 
        \prod_{i \neq j}
           \left( \sum_{\xi \in \Z}
                 F_{h_i}(s_i)e^{2\pi i m \xi y_i}\right) = - \sqrt{2\pi m b_j c_j} \, g_{J,h+e_j}.\\
        \end{split}
    \end{align*} 
    The second equality follows from the identities \eqref{Hermite}.
    Similarly, by applying $\overline{U_j}$ to $g_{J,h}$ we see that 
    \begin{align*}
        \begin{split} 
        \overline{U_{j}}g_{J,h} &= e^{2\pi i( q\cdot y + mz+lt)} 
        \sum_{\xi \in \Z}
                \sqrt{2\pi m  b_j  c_j}\left[\frac{d}{ds_j}F_{h_j}(s_j) + s_j F_{h_j}(s_j)\right]e^{2\pi i m\xi y_j}\\
        & \quad \quad \quad \quad\quad \quad \quad\cdot 
        \prod_{i \neq j}
           \left( \sum_{\xi \in \Z}
                 F_{h_i}(s_i)e^{2\pi i m \xi y_i}\right)\\
        &= 2 h_j \sqrt{2\pi m  b_j  c_j} \, e^{2\pi i(q\cdot y + mz+lt)} 
        \sum_{\xi \in \Z}
                 F_{h_j -1}(s_j) e^{2\pi i m\xi y_j}\\
        & \quad \quad \quad \quad\quad \quad \quad \cdot 
        \prod_{i \neq j}
           \left( \sum_{\xi \in \Z}
                 F_{h_i}(s_i)e^{2\pi i m \xi y_i}\right) = 2 h_j \sqrt{2\pi m b_j c_j} \, g_{J,h-e_j}.\qedhere
        \end{split}
    \end{align*}    
\end{proof}
    \end{prop}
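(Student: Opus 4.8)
The plan is to handle the two families of relations separately, since they have very different characters. The relations for $U_{n+1}$ and $\overline{U_{n+1}}$ are essentially immediate: the operator $U_{n+1} = a\diff{t} + id\diff{z}$ involves only the $t$- and $z$-derivatives, and the sole $t$- and $z$-dependence of $g_{J,h}$ sits in the prefactor $e^{2\pi i(q\cdot y + mz + lt)}$. Thus $\diff{t}$ and $\diff{z}$ act as scalar multiplication by the eigenvalues $2\pi i l$ and $2\pi i m$, and substituting gives $U_{n+1}g_{J,h} = 2\pi i(al + idm)g_{J,h}$, with the conjugate relation obtained by flipping the sign of the $d$-term. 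All the genuine content of the proposition therefore lies in the relations for $U_j$ and $\overline{U_j}$ with $1 \le j \le n$.

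For these I would fix $m > 0$ (the case $m < 0$ being symmetric) and introduce the rescaled variable $s_j = \sqrt{2\pi m c_j / b_j}\,(x_j + \xi + q_j/m)$ entering the Hermite factor, so that $\diff{x_j} = \sqrt{2\pi m c_j/b_j}\,\diff{s_j}$ and the piece $b_j\diff{x_j}$ produces the common prefactor $\sqrt{2\pi m b_j c_j}$. Then I would apply $U_j = b_j\diff{x_j} + ic_j\big(\diff{y_j} + x_j\diff{z}\big)$ to $g_{J,h}$ via the product rule, tracking three contributions: (i) $b_j\diff{x_j}$ differentiates the Hermite function, producing a term in $F_{h_j}'(s_j)$; (ii) $ic_j\diff{y_j}$ hits both the prefactor (pulling down $2\pi i q_j$) and the internal factor $e^{2\pi i m\xi y_j}$ (pulling down $2\pi i m\xi$); and (iii) $ic_j x_j\diff{z}$ hits the prefactor, pulling down $2\pi i m$ and leaving a multiplication by $x_j$.

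The crux — and the step I expect to be the main obstacle — is to verify that the terms coming from (ii) and (iii), which are all proportional to $F_{h_j}(s_j)$, conspire to produce exactly a multiplication by $s_j$. Summing their coefficients gives $-2\pi c_j q_j - 2\pi m c_j \xi - 2\pi m c_j x_j = -2\pi m c_j\,(x_j + \xi + q_j/m)$, which by the definition of $s_j$ equals precisely $-\sqrt{2\pi m b_j c_j}\,s_j$; this is where the particular form of $U_j$, mixing the $x$- and $y$-derivatives with the Heisenberg term $x_j\diff{z}$, is essential. Consequently $U_j g_{J,h}$ collapses to $\sqrt{2\pi m b_j c_j}$ times the sum of $F_{h_j}'(s_j) - s_j F_{h_j}(s_j)$, and the first identity in \eqref{Hermite} rewrites this as $-F_{h_j+1}(s_j)$, giving $U_j g_{J,h} = -\sqrt{2\pi m b_j c_j}\,g_{J,h+e_j}$. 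For $\overline{U_j}$ the computation is identical except that the sign of the $c_j$-terms flips, so (ii) and (iii) yield $+s_j F_{h_j}(s_j)$ and the second identity in \eqref{Hermite} gives $F_{h_j}'(s_j) + s_j F_{h_j}(s_j) = 2h_j F_{h_j-1}(s_j)$, whence $\overline{U_j}g_{J,h} = 2h_j\sqrt{2\pi m b_j c_j}\,g_{J,h-e_j}$. Finally, for $m < 0$ one repeats the argument verbatim with $|m| = -m$; the sign change in $s_j$ interchanges the two Hermite identities, so $U_j$ becomes the lowering operator and $\overline{U_j}$ the raising operator, producing the stated formulas. Throughout, the term-by-term differentiation of the series over $\xi \in \Z$ is justified by the rapid decay of the Hermite functions, which makes the sum and all of its derivatives absolutely convergent.
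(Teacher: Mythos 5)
Your proposal is correct and follows essentially the same route as the paper's own proof: the same rescaled variable $s_j$, the same collection of the $\diff{y_j}$ and $x_j\diff{z}$ contributions into a multiplication by $s_j$, and the same application of the Hermite identities \eqref{Hermite} to identify the raising and lowering actions. Your explicit verification that the terms from the prefactor and the internal factor $e^{2\pi i m \xi y_j}$ sum to exactly $-\sqrt{2\pi m b_j c_j}\,s_j$ is precisely the step the paper's computation carries out implicitly, so there is nothing to add.
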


\section{Example: $\delbar$-harmonic $(0,1)$-forms on $KT^4$}

We will now consider an example, calculating $h^{0,1}_{\bar \partial}$ on $KT^4$ for some family of almost Hermitian structures.
Define an almost complex structure $J = J_{\beta,\delta}$  on $KT^4$, depending on $\beta, \delta \in \R$, given by
    $$J:\diff{t}\mapsto \beta\diff{x}, \quad \quad J: \diff{y}+ x \diff{z} \mapsto \delta \diff{z}.$$
A Hermitian metric can then be chosen such that
$$\diff t,\ \beta \diff x,\ \diff y +x \diff z,\ \delta \diff z$$
are orthonormal.

The corresponding Laplacian is given by
 $$ \diffsq{t} + \beta^2 \diffsq{x} +  \left(\diff{y} + x \diff{z} \right)^2 + \delta^2 \diffsq{z},  $$
 and so we define
 $$U_1 = \beta \diff{x} + i  \left[ \diff{y} + x \diff{z}\right], \quad \quad U_2 = \diff{t} + i\delta \diff{z}.$$

A frame for the vector bundle $T^{1,0}KT^4$ can be given by
$$ V_1 = \frac 12 \left( \diff t -i \beta \diff x \right), \quad \quad V_2 = \frac 12 \left( \left[ \diff y + x \diff z \right] - i \delta\diff z \right)$$
along with its dual frame
$$\phi^1 = dt +  \frac i \beta dx, \quad \quad \phi^2 = dy + \frac i \delta \left[ dz-xdy\right],$$
which satisfies the structure equations
$$d\phi^1 = 0, \quad d\phi^2 = -\frac{\beta}{4\delta} \left(\phi^{12} + \phi^{1 \bar 2} + \phi^{2 \bar 1} - \phi^{\bar 1 \bar 2}\right).$$

Denoting a general $(0,1)$-form by
$$s = f \overline{\phi^1} + g \overline{\phi^2}$$
we see that $s$ is $\overline \partial$-harmonic iff $\overline \partial s = \overline{\partial}^* = 0$, iff
$$\begin{cases}
    -\overline{V_2} f + \overline{V_1} g - \frac{\beta}{4\delta}g = 0,\\
    V_1 f + V_2 g = 0.
\end{cases}$$
Rewriting this using $U_1, U_2, \overline{U_1}$ and $\overline{U_2}$ gives us
\begin{equation}\label{KT4 W eqn}
    \begin{cases}
    -\left(\left(U_2 - \overline{U_2}\right) - i\left(U_1 - \overline{U_1}  \right)\right)f + \left(\left(U_2 + \overline{U_2} \right) + i \left(U_1 + \overline{U_1} \right) \right) g - \frac{\beta}{\delta}g = 0,\\
    \left( \left( U_2 + \overline{U_2} \right) -i \left(U_1 + \overline{U_1}\right) \right) f - \left( \left( U_2 - \overline{U_2} \right) +i \left(U_1 - \overline{U_1}\right) \right) g = 0.
\end{cases}
\end{equation}

By taking either the sum or the difference of the two equations in $\eqref{KT4 W eqn}$ we construct a new system of equations

\begin{equation*}
    \begin{cases}
    \left( \overline{U_2} - i \overline{U_1} \right)f + \left(\overline{U_2} + i \overline{U_1}  \right) g - \frac{\beta}{\delta}g = 0,\\
    \left(- U_2 +i U_1 \right) f + \left( U_2+i U_1 \right) g -  \frac{\beta}{\delta}g = 0.
\end{cases}
\end{equation*}

Then, defining functions $F,G \in C^{\infty}(KT^4)$ such that
$$f = \frac{F + G}{2}, \quad  g = \frac{F - G}{2},$$
the system of equations becomes
\begin{equation*}
    \begin{cases}
    \left( \overline{U_2} -  \frac{\beta}{\delta} \right) F  - \left(i \overline{U_1} - \frac{\beta}{\delta}\right)G  = 0,\\
    \left(i U_1 - \frac{\beta}{\delta}\right) F - \left( U_2 - \frac{\beta}{\delta} \right) G = 0.
\end{cases}
\end{equation*}

As a consequence of Corollary \ref{Cor decomp of solns}, along with the decomposition \eqref{decomp of KT6}, it is sufficient to find the solutions $F,G$ which are contained within the spaces $\mathcal{S}_I$ and $\mathcal{T}_J$.

A computation of the solutions in $\mathcal{S}_I$ can be found in \cite[Section 3.3]{HZ}. In the next part of this section, we will focus on finding the solutions in $\mathcal{T}_J$, using a simpler method than \cite[Section 3.2]{HZ}.

\subsection{Solutions in $\mathcal{T}_J$}
From Corollary \ref{basis of T} we know that the functions $g_{J,h}$ form a basis of $\mathcal{T}_J$ and so we can write
$$F = \sum_{h= 0}^\infty A_h g_{J,h}, \quad \quad G = \sum_{h= 0}^\infty B_h g_{J,h}$$
where $A_h, B_h \in \mathbb{C}$ are sequences of complex numbers. Then, using the relations described in Proposition \ref{W relations}, the above system of equations yields a pair of recurrence relations on $A_h$ and $B_h$
$$\left(2\pi i (l-i\delta m) - \frac{\beta}{\delta}\right)A_h + \frac{\beta}{\delta} B_h - 2i (h+1) \sqrt{2\pi m \beta} B_{h+1} = 0,$$
$$ \left(-2\pi i (l+i\delta m) + \frac{\beta}{\delta}\right)B_h - \frac{\beta}{\delta} A_h - i \sqrt{2\pi m \beta} A_{h-1} = 0. $$
In principle, it should be possible to describe the asymptotic behaviour of this system as $h \rightarrow \infty$. This would allow us to determine the sequences $(A_h, B_h)_{h\in \mathbb{N}_0}$ for which the sum over $A_h g_{J,h}$ and $B_h g_{J,h}$ converges.

In this example however, there is a simpler way to find the solutions. If, instead of 
$$ g_{J,h} = e^{2\pi i ( qy + mz+ lt)} \sum_{\xi \in \Z} F_{h} \left(\sqrt{2\pi |m| \frac{1} {\beta} } \left(x + \frac{q}{m}+\xi\right)\right) e^{2\pi i m \xi y},$$
we use a slightly modified basis
$$\widetilde{g}_{J,h}  = e^{2\pi i ( qy + mz+lt)} \sum_{\xi \in \Z}e^{-i\frac{x}{\delta}} F_{h} \left(\sqrt{2\pi |m| \frac{1} {\beta} } \left(x + \frac{q}{m}+\xi\right)\right) e^{2\pi i m \xi y},$$
then we have (for $m > 0$)
$$U_1 \widetilde{g}_{J,h} = -\sqrt{2\pi m \beta} \widetilde{g}_{J,h+1} - i\frac{\beta}{\delta} \widetilde{g}_{J,h},$$
$$\overline{U_1} \widetilde{g}_{J,h} = 2 h \sqrt{2\pi m \beta} \widetilde{g}_{J,h-1} - i\frac{\beta}{\delta} \widetilde{g}_{J,h}.$$
By writing 
$$F = \sum_{h= 0}^\infty \widetilde{A}_h \widetilde{g}_{J,h}, \quad \quad G = \sum_{h= 0}^\infty \widetilde{B}_h \widetilde{g}_{J,h}$$
we obtain a new recurrence relation for $\widetilde{A}_h$ and $\widetilde{B}_h$
$$\left(2\pi i (l-i\delta m) - \frac{\beta}{\delta}\right)\widetilde{A}_h - 2i (h+1) \sqrt{2\pi m \beta} \widetilde{B}_{h+1} = 0,$$
$$ \left(-2\pi i (l+i\delta m) + \frac{\beta}{\delta}\right)\widetilde{B}_h - i \sqrt{2\pi m \beta} \widetilde{A}_{h-1} = 0. $$
Eliminating the $A_h$ terms, we get
$$ \left(4\pi^2(l^2 + \delta^2 m^2) + 4\pi i l \frac{\beta}{\delta} - \frac{\beta^2}{\delta^2} + 4\pi m h \beta \right) B_h = 0, $$
thus we have a non-trivial solution iff
$$4\pi^2(l^2 + \delta^2 m^2) + 4\pi i l \frac{\beta}{\delta} - \frac{\beta^2}{\delta^2} + 4\pi m h \beta  \neq 0.$$
Considering the imaginary part first, we find that $l = 0$. The real part then gives us the condition
$$ 4\pi^2 \delta^4 m^2 + 4 \pi m h \beta \delta^2  - \beta^2\neq 0 $$
In this way we have re-obtained the same result of \cite[Section 3.2]{HZ} with a different method.

\section{Example: $\delbar$-harmonic $(0,1)$-forms on $KT^6$}

We will now consider an example, calculating $h^{0,1}_{\bar \partial}$ on $KT^6$ for some family of almost Hermitian structures.
Define an almost complex structure $J_{a,b,c}$  on $KT^6$, depending on $a,b,c \in \R\setminus\{0\}$, given by
\begin{align*}
    &\diff{x_1}\mapsto a\diff{x_2}\\
    J: \quad &\diff{y_1}+ x_1 \diff{z} \mapsto b\left(\diff{y_2}+ x_2 \diff{z}\right)\\
    &\diff{t} \mapsto c\diff{z}.
\end{align*}
Any left invariant Hermitian metric $g_{a,b,c,\rho,\sigma,\tau}$ can then be chosen by setting
\[
\rho\diff{x_1},\ \rho a\diff{x_2},\ \sigma\left(\diff{y_1}+ x_1 \diff{z}\right),\ \sigma b\left(\diff{y_2}+ x_2 \diff{z}\right),\ \tau\diff{t},\ \tau c\diff{z}
\]
to be orthonormal vectors, for $\rho,\sigma,\tau\in\R\setminus\{0\}$.

It will be convenient to define the vector fields
 \begin{align*}
U_1 &=  \diff{x_1} + i  \left[ \diff{y_1} + x_1 \diff{z}\right], \quad \quad U_3 = \diff{t} + ci \diff{z},\\
U_2 &=  a\diff{x_2} + bi  \left[ \diff{y_2} + x_2 \diff{z}\right],
\end{align*}
while frame of $(1,0)$-vector fields is
\begin{align*}
V_1 &= \frac 12 \left( \diff{x_1} -ai \diff{x_2} \right),\quad \quad  V_3 = \frac 12 \left(\diff{t} -ci  \diff{z} \right),\\
V_2 &= \frac 12 \left( \left[\diff{y_1}+ x_1 \diff{z} \right]-bi \left[\diff{y_2} + x_2 \diff z \right]\right),
\end{align*}
and the dual frame of $(1,0)$-forms is then given by
$$ \phi^1 = dx_1 + \frac i a dx_2, \quad \quad \phi^2 = dy_1 + \frac i b dy_2, \quad \quad \phi^3 = dt +\frac i c \left[dz -x_1 dy_1 - x_2 dy_2\right] ,  $$
with structure equations
$$d\phi^1 = 0, \quad \quad d\phi^2 = 0, $$
$$ d\phi^3 = i\frac {ab -1}{4c} \left(\phi^{12}+ \phi^{\bar 1 \bar 2}\right) + i\frac{ab + 1}{4c} \left(\phi^{2 \bar 1} - \phi^{1 \bar 2} \right).  $$

A general $(0,1)$-form can be written as  $s = F\overline{ \phi^1} + G \overline{\phi^2} + K \overline{\phi^3} \in \mathcal{A}^{0,1}(KT^6)$, for some smooth complex valued functions $F,G,K \in C^{\infty}(KT^6)$.  We want to know when $s \in \mathcal{H}^{0,1}_{\bar \partial}$, \textit{i.e.}, when $\Delta_{\bar\partial}s = 0$, which holds if and only if the two conditions $\bar\partial s = 0 $ and $ \bar \partial^* s = 0$ are satisfied. From these we obtain the PDEs
\begin{equation}\label{system}
    \begin{cases}
-\overline{V_2}F + \overline{V_1}G + i\frac{ab -1 }{4c}K = 0,\\
-\overline{V_3}F + \overline{V_1}K = 0,\\
-\overline{V_3}G + \overline{V_2}K = 0,\\
\sigma^2\tau^2 V_1 F + \rho^2\tau^2 V_2 G + \rho^2\sigma^2 V_3 K = 0.
\end{cases}
\end{equation}

Notice that these PDEs all involve only left-invariant differential operators and so the decomposition of functions given in \eqref{decomp of KT6} can help us find solutions.

In particular, it is sufficient to find the solutions when $F,G,K$ are in $\mathcal{S}_{I}$ for fixed $I \in \mathcal{I}$ and the solutions when $F,G,K$ are in $\mathcal{T}_{J}$ for fixed $J \in \mathcal{J}$.

\subsection{Solutions in $\mathcal{S}_{I}$}
In this section we will make use of the basis of $\mathcal{S}_{I}$ given by 
$$f_{I} = f_{p,q,l} = e^{2\pi i (p_1 x_1 + p_2 x_2 + q_1 y_1 + q_2 y_2 +lt)} $$
for any $I = (p_1,p_2,q_1,q_2,l) \in \mathcal{I}=\Z^5$. The functions $F,G,K$ can be expressed in this basis as
\[
F = \sum_{I\in \mathcal{I}} F_{I} f_{I},\quad\quad  G = \sum_{I\in \mathcal{I}} G_{I} f_{I},\quad\quad  K = \sum_{I\in \mathcal{I}} K_{I} f_{I}.
\]
The vector fields $V_1,V_2,V_3,\c{V_1},\c{V_2},\c{V_3}$ operate on $f_I$ as
\begin{align*}
V_1f_I&=\pi i(p_1-iap_2)f_I, & V_2f_I&=\pi i(q_1-ibq_2)f_I, & V_3f_I&=\pi i lf_I,\\
\c{V_1}f_I&=\pi i(p_1+iap_2)f_I, & \c{V_2}f_I&=\pi i(q_1+ibq_2)f_I, & \c{V_3}f_I&=\pi i lf_I.
\end{align*}
Therefore system \eqref{system} traduces into the following system for the coefficients of $F,G,K$
\begin{equation*}
\begin{cases}
-\pi (q_1+ibq_2)F_I+\pi (p_1+iap_2)G_I+\frac{ab-1}{4c}K_I=0,\\
-lF_I+(p_1+iap_2)K_I=0,\\
-lG_I+(q_1+ibq_2)K_I=0,\\
\sigma^2\tau^2(p_1-iap_2)F_I+\rho^2\tau^2(q_1-ibq_2)G_I+\rho^2\sigma^2lK_I=0.
\end{cases}
\end{equation*}
If $l=0$, then from the second and third equations either $K\in\C$ or $I=0$. In both cases from the first equation either $K=0$ or $ab=1$, \textit{i.e.}, the almost complex structure is integrable.
If $l\ne0$, one can substitute $F_I$ and $G_I$ from the second and the third equations into the first one, obtaining either $K=0$ or $ab=1$.

Thus, if $K=0$, from the first and the last equations we deduce $F,G\in\C$. On the other hand, if $ab=1$, again from the first and the last equations it follows that $F,G,K\in\C$.

\subsection{Solutions in $\mathcal{T}_{J}$}

In this section we will make use of the basis of $\mathcal{T}_{J}$ given by 
$$g_{J,h} = e^{2\pi i( q\cdot y + m z+lt)} \prod_{j=1,2}\sum_{\xi \in \Z}F_{h_j}(\sqrt{2\pi |m|}(x_j + \frac{q_j}{m} + \xi))e^{2\pi i m \xi y_j}$$ 
in Corollary \ref{basis of T}. It will be sufficient to consider the case $\rho=\sigma=\tau=1$.

Transform the system \eqref{system} as follows: 
\begin{itemize}
    \item Make the substitution $F = \alpha + \beta$, $G = i(-\alpha + \beta)$, where $\alpha,\beta \in \mathcal{T}_{J}$.
    \item Replace the second and third equations with the sum of the second equation with $\pm i$ times the third.
\end{itemize}
This yields the system
\begin{equation*}
    \begin{cases}
        (-i\overline{V_1}-\overline{V_2})\alpha + (i\overline{V_1} - \overline{V_2})\beta + i\frac{ab - 1}{4c} K =0,\\
        -2 \overline{V_3} \alpha + (\overline{V_1} + i \overline{V_2})K = 0, \\
         -2\overline{V_3}\beta + (\overline{V_1}-i \overline{V_2})K=0, \\
        (V_1 - i V_2)\alpha + (V_1 + i V_2)\beta + V_3 K = 0.
    \end{cases}
\end{equation*}

Taking into account that
\[
2iV_1-2V_2=iU_1+U_2,\quad\quad 2iV_1+2V_2=i\c{U_1}+\c{U_2}, \quad\quad 2\overline{V_3}=U_3,
\]
this system is equivalent to
\begin{equation*}
    \begin{cases}
  (-i\c{U_1} + \c{U_2})\alpha +  (i{U_1} - {U_2})\beta + i\frac{ab-1}{2c} K =0,\\
        2iU_3 \alpha + (-i{U_1} + {U_2})K = 0, \\
        2iU_3\beta + (-i\c{U_1} + \c{U_2})K=0, \\
        (i\c{U_1}+\c{U_2})\alpha + (iU_1+U_2)\beta + i\c{U_3} K = 0.
    \end{cases}
\end{equation*}

Now we make use of the basis given by $g_{J,h}$ to write $\alpha,\beta,K$ as
$$\alpha = \sum_{h_1,h_2 \in \N_0} A_{h_1,h_2} g_{h_1,h_2},  $$
$$\beta = \sum_{h_1,h_2 \in \N_0} B_{h_1,h_2} g_{h_1,h_2},  $$
$$K = \sum_{h_1,h_2 \in \N_0} K_{h_1,h_2} g_{h_1,h_2}.  $$

Substituting this into the above system of equations and assuming that $m>0$ gives conditions on the coefficients $A_{h_1,h_2}, B_{h_1,h_2}, K_{h_1,h_2} \in \C$.

\begin{equation*}
    \begin{cases}
2\sqrt{2\pi m}\left[-i(h_1+1)A_{h_1+1,h_2}+\sqrt{ab}(h_2+1)A_{h_1,h_2+1}\right]+\\
+\sqrt{2\pi m}\left[-iB_{h_1-1,h_2}+\sqrt{ab}B_{h_1,h_2-1}\right]+ i\frac{ab-1}{2c} K_{h_1,h_2}=0,\\ \\

-4\pi(l+icm)A_{h_1,h_2}+\sqrt{2\pi m}\left[iK_{h_1-1,h_2}-\sqrt{ab}K_{h_1,h_2-1}\right]=0,\\ \\

-4\pi(l+icm)B_{h_1,h_2}+2\sqrt{2\pi m}\left[-i(h_1+1)K_{h_1+1,h_2}+\sqrt{ab}(h_2+1)K_{h_1,h_2+1}\right]=0,\\ \\

2\sqrt{2\pi m}\left[i(h_1+1)A_{h_1+1,h_2}+\sqrt{ab}(h_2+1)A_{h_1,h_2+1}\right]+\\
-\sqrt{2\pi m}\left[iB_{h_1-1,h_2}+\sqrt{ab}B_{h_1,h_2-1}\right]-2\pi(l-icm)K_{h_1,h_2}=0.
    \end{cases}
\end{equation*}

Substituting the coefficients of $\alpha$ and $\beta$ from the second and third equations into the first and the last equations, we obtain 
\begin{equation*}
    \begin{cases}
K_{h_1,h_2}(1-ab)(3m-i\frac{l}{c})=0,\\ \\

K_{h_1,h_2}\left[2\pi(l^2+c^2m^2)+m(2h_1+1)\right]=0.
    \end{cases}
\end{equation*}
We see that independently from both equations $K_{h_1,h_2}=0$ for all $h_1,h_2\in\N_0$. In particular, since the first equation in the system is obtained just from asking $\delbar s=0$, we deduce that if $ab\ne1$ then there are no (non zero) $\delbar$ closed $(0,1)$-forms lying in $\mathcal{T}_J$.
A nearly identical argument shows that the same is true when $m<0$.

\begin{cor}
If $ab\ne1$, then for any choice of a left invariant metric $g$ on $(KT^6,J_{a,b,c})$ we have
\[
\mathcal{H}^{0,1}_{\delbar}=\C<\c{\phi^1},\c{\phi^2}>.
\]
If $ab=1$, \textit{i.e.}, $J_{a,b,c}$ is integrable, then on $(KT^6,J_{a,b,c})$ we have
\[
\mathcal{H}^{0,1}_{\delbar}\simeq H^{0,1}_{\delbar}\simeq \C<\c{\phi^1},\c{\phi^2},\c{\phi^3}>.
\]
\end{cor}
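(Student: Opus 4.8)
The plan is to package the two case-analyses already performed (solutions with $F,G,K$ inside a single $\mathcal{S}_I$, and inside a single $\mathcal{T}_J$) into a statement about the entire space $\mathcal{H}^{0,1}_{\delbar}$. Since $s=F\overline{\phi^1}+G\overline{\phi^2}+K\overline{\phi^3}$ is $\delbar$-harmonic precisely when \eqref{system} holds, and every operator occurring in \eqref{system} is left-invariant, I would decompose each of $F,G,K$ according to \eqref{decomp of KT6} and project \eqref{system} onto the mutually orthogonal summands. Applying Corollary \ref{Cor decomp of solns} to each scalar equation of the coupled system shows that $s\in\mathcal{H}^{0,1}_{\delbar}$ if and only if, for every $I$ and every $J$, the $\mathcal{S}_I$- and $\mathcal{T}_J$-components of $(F,G,K)$ solve \eqref{system} on their own. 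Hence $\mathcal{H}^{0,1}_{\delbar}$ is the closure of the sum of the solution spaces computed summand by summand, and it remains only to collect those contributions.

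I would first dispose of the $\mathcal{T}_J$ summands. When $ab\neq1$ this is exactly the remark already made: the first reduced equation $(1-ab)(3m-i\frac{l}{c})K_{h_1,h_2}=0$ arises only from the $\delbar$-part of the system, hence does not involve $\rho,\sigma,\tau$; since $m\neq0$ it forces $K=0$, and then the second and third (again $\delbar$-) equations, whose coefficient $-4\pi(l+icm)$ is nonzero, force $\alpha=\beta=0$, so $F=G=K=0$ for every metric (the case $m<0$ being identical). This is the point that makes fixing $\rho=\sigma=\tau=1$ harmless here. When $ab=1$ the harmonic condition is still available through the second reduced equation $[2\pi(l^2+c^2m^2)+m(2h_1+1)]K_{h_1,h_2}=0$, whose bracket is strictly positive, again giving $K=0$ and then $\alpha=\beta=0$; moreover $ab=1$ makes the $(0,2)$-component $i\frac{ab-1}{4c}\phi^{\bar1\bar2}$ of $d\phi^3$ vanish, so $J$ is integrable and $\dim\mathcal{H}^{0,1}_{\delbar}$ is a complex invariant, whence the single computation at $\rho=\sigma=\tau=1$ already settles all metrics.

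For the $\mathcal{S}_I$ summands I would run the three cases of the coefficient system for general $\rho,\sigma,\tau$. If $l\neq0$, solving the second and third equations for $F_I,G_I$ in terms of $K_I$ and inserting into the first produces a cancellation leaving $\frac{ab-1}{4c}K_I=0$, which kills $K_I$ when $ab\neq1$; when $ab=1$ the last equation instead contributes the strictly positive quantity $\sigma^2\tau^2|p_1+iap_2|^2+\rho^2\tau^2|q_1+ibq_2|^2+\rho^2\sigma^2l^2$, forcing $K_I=0$ as well. If $l=0$ but $(p,q)\neq0$, the second and third equations give $K_I=0$ and the first and last become a $2\times2$ system of determinant $-\rho^2\tau^2|q_1+ibq_2|^2-\sigma^2\tau^2|p_1+iap_2|^2<0$, so $F_I=G_I=0$. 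The only surviving index is $I=0$, where $f_0\equiv1$ reduces the system to $\frac{ab-1}{4c}K_0=0$ with $F_0,G_0$ free, yielding $\C\langle\overline{\phi^1},\overline{\phi^2}\rangle$ when $ab\neq1$ and $\C\langle\overline{\phi^1},\overline{\phi^2},\overline{\phi^3}\rangle$ when $ab=1$.

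Assembling the pieces, the $\mathcal{T}_J$ summands contribute nothing and only the constant summand $\mathcal{S}_0$ survives, so $\mathcal{H}^{0,1}_{\delbar}=\C\langle\overline{\phi^1},\overline{\phi^2}\rangle$ for $ab\neq1$ and $\C\langle\overline{\phi^1},\overline{\phi^2},\overline{\phi^3}\rangle$ for $ab=1$; in the integrable case the isomorphism $\mathcal{H}^{0,1}_{\delbar}\simeq H^{0,1}_{\delbar}$ recalled in the introduction upgrades this to the Dolbeault statement. The main obstacle is not computational but bookkeeping: one must be sure that working summand-by-summand really recovers all of $\mathcal{H}^{0,1}_{\delbar}$, and one must track carefully which reduced equations descend from $\delbar$ and which from $\delbar^*$, since only the former are metric-free — and it is precisely this distinction that yields the claimed independence of the metric when $ab\neq1$.
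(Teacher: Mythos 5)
Your proposal is correct and takes essentially the same route as the paper: decompose $F,G,K$ via \eqref{decomp of KT6} and Corollary \ref{Cor decomp of solns}, invoke the $\mathcal{S}_I$ computation (carried out for arbitrary $\rho,\sigma,\tau$) and the $\mathcal{T}_J$ computation, noting that for $ab\ne1$ the vanishing in $\mathcal{T}_J$ rests only on the metric-free $\delbar$-equations, while for $ab=1$ integrability makes the Dolbeault count metric-independent so the $\rho=\sigma=\tau=1$ computation suffices. The extra details you supply (the $2\times 2$ determinant, the positivity argument when $l\ne 0$ and $ab=1$, and the step $K=0\Rightarrow\alpha=\beta=0$ via the nonvanishing of $l+icm$) are exactly the points the paper's proof leaves implicit, and they check out.
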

\begin{proof}
If $ab\ne1$, then the solutions of $\Delta_{\delbar}s=0$ in $\mathcal{S}_{I}$ are generated by $\c{\phi^1},\c{\phi^2}$ for any choice of a left invariant metric $g_{a,b,c,\rho,\sigma,\tau}$, while in $\mathcal{T}_J$ there are no (non zero) $\delbar$ closed $(0,1)$-forms and thus no $\delbar$ harmonic $(0,1)$-forms. If $ab=1$, then Dolbeault cohomology is metric independent and so we can choose the metric $g_{a,b,c,1,1,1}$ to compute it solving $\Delta_{\delbar}s=0$. The solutions in $\mathcal{S}_{I}$ are generated by $\c{\phi^1},\c{\phi^2},\c{\phi^3}$, while in $\mathcal{T}_J$ there are no (non zero) $\delbar$ harmonic $(0,1)$-forms.
\end{proof}

\end{document}